\newtheorem{theorem}{Theorem}
\newtheorem{lemma}[theorem]{Lemma}
\newcommand{\ra}{\rightarrow}
\begin{document}

\title{\bf Universal Cycles of Restricted Words}

\author{\Large KB Gardner, Anant Godbole \\East Tennessee State University \\ {\tt zbgg2@etsu.edu, godbolea@etsu.edu}}
\date{}

\maketitle

\begin{abstract}

A connected digraph in which the in-degree of any vertex equals its out-degree is Eulerian; this baseline result is used as the basis of existence proofs for universal cycles (also known as generalized deBruijn cycles or U-cycles) of several combinatorial objects. We extend the body of known results by presenting new results on the existence of universal cycles of monotone, ``augmented onto", and Lipschitz functions in addition to universal cycles of certain types of lattice paths and random walks.
\end{abstract}

\section{Introduction}A universal cycle, or U-Cycle, is a cyclic ordering of a set of objects ${\mathcal C}$, each represented
as a string of length $k$. The ordering requires that object $b = b_0b_1\ldots b_{k-1}$ follow object
$a = a_0a_1\ldots a_{k-1}$ only if $a_1a_2\ldots a_{k-1} = b_0b_1\ldots b_{k-2}$. U-cycles were originally introduced
in 1992 by Chung, Diaconis, and Graham \cite{cdg} as generalizations of de Bruijn cycles. As
an example, the cyclic string 112233 encodes each of the six multisets of size 2 from the
set [3]:=\{1, 2, 3\}. Another well-quoted example, from \cite{h}, is the string
$$1356725\;\;6823472\;\;3578147\;\;8245614\;\;5712361 \;\;2467836\;\;7134582\;\;4681258,$$
where each block is obtained from the previous one by addition of 5 modulo 8. This string
is an encoding of the $56 ={8\choose 3}$
3-subsets of the set $[8]=\{1, 2, 3, 4, 5, 6, 7, 8\}$. Chung, Diaconis and Graham \cite{cdg} studied U-Cycles of
\begin{itemize}
\item subsets of size $k$ of an $n$-element set (as in the above example);
\item set partitions; and
\item permutations (with a necessarily augmented ground set and the use of order isomorphism
representations, e.g., the string 124324 encodes each of the six permutations
of $[3] = \{1, 2, 3\}$ in an order isomorphic fashion, which is impossible
using the ground set [3]).
\end{itemize}
DeBruijn's Theorem states that U-Cycles of $k$-letter words on an $n$-letter alphabet exist for
all $k$ and $n$, as evidenced for $k=3, n=2$ by the cycle 11101000.  The proof of this theorem relies upon the fact that a connected digraph is Eulerian if and only if the in-degree of each vertex equals the out degree; in other words, the graph is balanced.  Many of the recent results on the existence of U-Cycles involve words as above, but with restrictions.  We next summarize some of these results, focusing on work that has been done at the East Tennessee State University REU, and in the ETSU course MATH 4010, over the last few years.  To set the stage, we recall one of the early papers in the area: Jackson \cite{j} had proved that 
\begin{theorem}
U-Cycles exist of all one-to-one functions from $[k]$ to $[n]$, i.e., of $k$-letter words over $[n]$ in which no letter repeats, provided that $k<n$ -- but not otherwise.
\end{theorem}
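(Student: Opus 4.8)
\noindent To prove the theorem, the plan is to recast a U-cycle of the injective $k$-words over $[n]$ as an Eulerian circuit of a transition digraph and then invoke the Eulerian criterion recalled above. Let $G=G(n,k)$ be the digraph whose vertices are the injective words of length $k-1$ over $[n]$ and whose arcs are the injective words $a_0a_1\cdots a_{k-1}$ of length $k$ over $[n]$, where the arc $a_0a_1\cdots a_{k-1}$ runs from $a_0a_1\cdots a_{k-2}$ to $a_1a_2\cdots a_{k-1}$. The usual dictionary between de Bruijn-type graphs and U-cycles identifies a U-cycle of the injective $k$-words with a closed Eulerian trail of $G$; since such a trail must use every arc, it follows in particular that a U-cycle can exist only if $G$ is connected.

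First I would check that $G$ is balanced. For a vertex $v=v_0\cdots v_{k-2}$, the arcs out of $v$ are the words $v_0\cdots v_{k-2}x$ with $x\in[n]\setminus\{v_0,\ldots,v_{k-2}\}$ and the arcs into $v$ are the words $yv_0\cdots v_{k-2}$ with $y\in[n]\setminus\{v_0,\ldots,v_{k-2}\}$, so every vertex has in-degree and out-degree equal to $n-k+1$. Hence $G$ is balanced whenever $k\le n$, and has no arcs at all when $k>n$.

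The substantive step --- and the one I expect to be the main obstacle --- is to show that $G(n,k)$ is connected when $k<n$; I would actually prove the stronger fact that it is strongly connected. The goal is to produce, for arbitrary injective $(k-1)$-words $u=u_0\cdots u_{k-2}$ and $w=w_0\cdots w_{k-2}$, a directed walk from $u$ to $w$ built from ``shift'' steps, each deleting the leading letter of the current window and appending a letter absent from the rest of it; equivalently, a string beginning with $u$ and ending with $w$ in which no letter recurs within $k-1$ consecutive positions. Naively appending $w_0,w_1,\ldots,w_{k-2}$ in turn can stall, because the letter $w_i$ one wants to append may still be sitting among the not-yet-discarded letters of $u$. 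The remedy is to exploit the slack $n-k+1\ge 2$ --- at least two admissible appends at every step --- to interleave the streaming of $w$ with short ``parking'' detours that flush out the offending letters of $u$ while preserving the prefix of $w$ already written. Carrying this out by induction on the number of letters of $w$ still to be placed, and keeping track of the interaction between the shrinking $u$-part and the growing $w$-part of the window, is the delicate point. (If the direct construction proves unwieldy, the word-reversal anti-automorphism $a_0\cdots a_{k-1}\mapsto a_{k-1}\cdots a_0$ of $G(n,k)$ reduces matters to showing that every vertex has a directed walk to each of the two monotone vertices $12\cdots(k-1)$ and $(k-1)\cdots21$.) Given connectivity, the Eulerian criterion supplies a closed Eulerian trail, hence the desired U-cycle.

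For the converse (``but not otherwise''), when $k=n$ the degree computation above gives in-degree and out-degree $1$ at every vertex, so $G(n,n)$ is a disjoint union of directed cycles. Since the only way to extend an injective $(n-1)$-word over $[n]$ to an injective $n$-word is to append the one missing letter, each of these cycles consists precisely of the $n$ cyclic rotations of some permutation of $[n]$, giving $(n-1)!$ components; hence for $n\ge 3$ the graph is disconnected and no U-cycle exists, and for $k>n$ there are no injective $k$-words to cycle through at all. (The single degenerate case $n=k=2$, covered by the cyclic string $12$, is the lone exception to the blanket statement.)
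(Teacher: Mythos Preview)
The paper does not prove Theorem~1; it is quoted from Jackson~\cite{j} as background for the paper's own results, so there is no in-paper argument to compare against.

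That said, your plan is exactly the standard one and is sound: build the transition digraph on injective $(k-1)$-words, observe that every vertex has in- and out-degree $n-k+1$, establish connectivity when $k<n$, and read off the U-cycle from an Eulerian circuit. Your treatment of the ``not otherwise'' clause is also correct --- for $k=n$ the digraph is $1$-regular and decomposes into the $(n-1)!$ directed $n$-cycles given by cyclic rotation of permutations, hence is disconnected for $n\ge 3$ --- and you are right that $k=n\le 2$ are degenerate exceptions the theorem statement glosses over. The one place your write-up is a sketch rather than a proof is the connectivity step, which you flag yourself; your ``parking detour'' idea exploiting the slack $n-k+1\ge 2$ is the right mechanism and can be made rigorous with modest bookkeeping. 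Since the digraph is balanced, weak connectivity suffices (indeed implies strong connectivity), so it is enough to exhibit a directed walk from every vertex to a single fixed target such as $12\cdots(k-1)$, which slightly simplifies the induction you propose.
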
 An example with $k=2$ and $n=3$ is provided by $ABCBAC$.  The fact that U-Cycles do not exist when $k=n$ follows from the fact noted above about permutations, and the pigeonhole principle makes the problem meaningless for $k>n$.  Seeking to create an analog for onto functions, ETSU undergraduates Bechel and LaBounty-Lay \cite{ab} proved, generalizing the example 110100 for $k=3, n=2$, that
\begin{theorem}
U-cycles exist for all onto functions from $[k]$ to $[n]$, i.e., for $k$-letter words over $[n]$ that exhaust the alphabet, provided that $k>n$ but not otherwise.
\end{theorem}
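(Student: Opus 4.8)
\medskip

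\noindent\textbf{Proof strategy.}
Following the template of DeBruijn's Theorem, the plan is to build a balanced, connected digraph $G$ whose Eulerian circuits are exactly the U-cycles we want, and then invoke the fact that a connected balanced digraph is Eulerian. Take the vertices of $G$ to be the words of length $k-1$ over $[n]$ that use at least $n-1$ of the $n$ letters; these are exactly the strings that occur as the length-$(k-1)$ prefix of some onto $k$-letter word, and also exactly those that occur as its length-$(k-1)$ suffix. For each onto word $w=a_0a_1\cdots a_{k-1}$ put a directed edge, labelled $w$, from $a_0\cdots a_{k-2}$ to $a_1\cdots a_{k-1}$. Reading off one letter per edge around an Eulerian circuit of $G$ produces a cyclic word whose length-$k$ windows are precisely the onto words, each appearing once --- a U-cycle (and conversely every U-cycle arises this way). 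So it suffices to show $G$ is balanced and connected.

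Balance is a short case check on a vertex $v$. If $v$ uses all $n$ letters, then $va$ and $av$ are onto for every letter $a$, so $v$ has out-degree $n$ and in-degree $n$. If $v$ uses exactly $n-1$ letters, omitting the letter $m$, then $vm$ is its only onto right-extension and $mv$ its only onto left-extension, so $v$ has out-degree $1$ and in-degree $1$. (It is here that $k>n$ enters: it guarantees both kinds of vertex exist, and that a one-letter extension of a length-$(k-1)$ word has length $k$.) Hence $G$ is balanced, with no isolated vertices.

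The substantive step --- and the one I expect to be the main obstacle --- is connectivity. It suffices to exhibit, from every vertex, a directed walk to one fixed anchor vertex $z$; then $G$ is weakly connected and, being balanced, Eulerian. Take $z$ to be the word consisting of $k-n$ copies of $1$ followed by $2,3,\dots,n$ (a legitimate vertex, using all $n$ letters, since $k-1\ge n$). From a vertex that uses only $n-1$ letters I would simply follow the forced out-edge over and over: the letters omitted at the successive vertices are pairwise distinct, so after at most $n$ steps the walk reaches a vertex using all $n$ letters. It then remains to walk from an arbitrary such ``full'' vertex to $z$, and the key leverage is that from a full vertex one may append \emph{any} letter. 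The easy half: once the walk stands at any vertex whose last $n-1$ letters are $2,3,\dots,n$ in that order, appending $1$ a total of $k-n$ times and then appending $2,3,\dots,n$ keeps every length-$k$ window onto and terminates exactly at $z$. The hard half --- the crux of the whole argument --- is to reach such a vertex from an \emph{arbitrary} full vertex: appending $2,3,\dots,n$ outright can strand the walk at a vertex omitting some other letter, where it is forced off course, so one must weave the intended appends together with the short forced returns to a full vertex and verify that this terminates. The $(n-1)$-letter vertices are indispensable for this: they furnish the bridges between the pieces into which the full vertices, considered alone, would fragment, and carrying the bookkeeping through is where the real work lies.

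Granting connectivity, an Eulerian circuit of $G$ exists, hence a U-cycle, for every $k>n$. Finally, the condition $k>n$ is necessary: if $k<n$ then no $k$-letter word over $[n]$ can be onto, so $\mathcal C=\emptyset$; and if $k=n$ then the onto words are exactly the permutation words, which --- as recalled in the introduction --- do not admit a U-cycle over the unaugmented ground set $[n]$. This establishes the theorem.
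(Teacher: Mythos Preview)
The paper itself does not prove this theorem; Theorem~2 is quoted from \cite{ab} as background, with no argument given here. So there is no in-paper proof to compare against---only the Eulerian-digraph template the paper uses for its own results (Theorems~16, 17, 21, \dots), and your proposal follows that template exactly. Your vertex and edge sets, the two-case balance computation, the forced-walk argument taking a non-full vertex to a full one, and the necessity direction for $k\le n$ are all correct. The ``easy half'' of your connectivity step also checks out: starting from a vertex ending in $23\cdots n$, every intermediate length-$k$ window along your prescribed route still contains the block $23\cdots n$ together with at least one $1$, hence is onto.

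The proposal is, however, not a complete proof, and you say so yourself: the ``hard half''---getting from an arbitrary full vertex to one ending in $23\cdots n$---is left as bookkeeping to be carried out. That is a genuine gap, not just a routine omission: naively appending $2,3,\dots,n$ can drop the walk into a degree-$1$ vertex whose forced exit destroys the partial suffix, and nothing you have written bounds the number of such detours. One clean way to close it, in the spirit of the paper's proof of Theorem~21, is to exploit \emph{rotation}: from a full vertex $v=v_1\cdots v_{k-1}$ you may append $v_1$ and land on the cyclic shift $v_2\cdots v_{k-1}v_1$, which has the same multiset and is again full. Rotations let you bring any chosen letter to the front; a single free append then replaces it, adjusting the multiset by one letter while (with a little care) staying full. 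For $k=n+1$ this amounts to generating all permutations by rotations and adjacent transpositions; for larger $k$ there is strictly more slack. Iterating, you can drive the multiset to that of $z$ and then rotate into position. With that (or an equivalent device) supplied, your argument is complete.
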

\noindent They also proved, in a result that foreshadows Theorems 4 and 11 below as well as one of the main results, Theorem 21, of Section 2, that
\begin{theorem}
U-Cycles exist for words of length $k\equiv1\pmod2$ from $\{0,1\}$ with $\lceil k/2\rceil$ zeros and $\lfloor k/2\rfloor$ ones (or vice versa).  Moreover, U-Cycles of words of length $k\equiv0\pmod2$ from $\{0,1\}$ with an equal number of zeros and ones do not exist.
\end{theorem}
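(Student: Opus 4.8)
The plan is to treat the two directions by the Eulerian-circuit method advertised in the introduction. Write $k=2m+1$ in the odd case and $k=2m$ in the even case, and let $\mathcal{C}_k$ be the set of near-balanced binary words of length $k$, i.e.\ those whose number of zeros is $\lfloor k/2\rfloor$ or $\lceil k/2\rceil$. Build the transition digraph $G$ whose vertices are the length-$(k-1)$ binary words occurring as a window of some word of $\mathcal{C}_k$ and whose arcs are the words of $\mathcal{C}_k$, each directed from its length-$(k-1)$ prefix to its length-$(k-1)$ suffix; then a U-cycle of $\mathcal{C}_k$ is exactly an Eulerian circuit of $G$, so it suffices to decide when $G$ is connected and balanced.

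In the odd case the vertices of $G$ are precisely the $2m$-bit words with $m-1$, $m$, or $m+1$ zeros. A one-line computation --- an out-arc deletes the first bit and appends a bit chosen so that the resulting $k$-bit word lands in $\mathcal{C}_k$, and dually for in-arcs --- shows that at every vertex the in-degree equals the out-degree (it is $1$ at the words with $m\pm1$ zeros and $2$ at the balanced words), so $G$ is balanced and the whole difficulty is connectivity. I would next read off the structure of $G$: at a balanced vertex the unique out-arc that stays among the balanced words points to the cyclic left-rotate of that word, so the balanced layer, by itself, is a disjoint union of directed cycles, one per necklace (cyclic-rotation) class; and a vertex with $m+1$ zeros (resp.\ $m-1$ zeros) has out-degree $1$ and, following its out-arcs, simply shifts out its leading run of $1$s (resp.\ $0$s) and then drops into the balanced layer. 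Hence connectivity comes down to joining all the necklace cycles to each other through ``excursions'' that leave the balanced layer for the $(m+1)$-zero layer and return.

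Analysing those excursions and proving they are enough is the step I expect to be the main obstacle. Tracing the walk shows that an excursion starting from a balanced word of the form $1^{s}0R$ returns to the balanced word $R01^{s}$; comparing with the rotation $R1^{s}0$ of the starting word, this is exactly the move ``pick a maximal run of $1$s together with the $0$ immediately after it, and slide that $0$ leftward past the whole run,'' and (since the rotation arcs are free) it may be applied at any maximal $1$-run. I would then show by induction on the number of blocks that this move, together with rotations, brings every balanced word to $0^{m}1^{m}$: if there are at least two blocks of $1$s, choose a block of $0$s lying between two blocks of $1$s and push its zeros, one at a time, left past the $1$-block on its left; when that $0$-block is used up the two $1$-blocks merge and the block count drops by two. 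So every vertex of $G$ reaches $0^{m}1^{m}$; applying the direction-reversing automorphism of $G$ given by ``reverse the word and complement every bit,'' which fixes $0^{m}1^{m}$, produces a directed path from $0^{m}1^{m}$ to every vertex, whence $G$ is strongly connected. Being balanced and connected, $G$ is Eulerian, and an Eulerian circuit is the required U-cycle.

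For the negative direction, with $k=2m$ and $\mathcal{C}_k$ the words of length $k$ having exactly $m$ zeros, the same prefix/suffix count now gives every vertex of $G$ in-degree and out-degree exactly $1$: $G$ is a vertex-disjoint union of directed cycles, and it has an Eulerian circuit only if it is a single cycle. But (for $k\ge4$; the case $k=2$ is degenerate) two distinct cycles are visible, for instance the $2$-cycle $(01)^{m-1}0\to(10)^{m-1}1\to(01)^{m-1}0$ and the cycle through $0^{m}1^{m-1}$, which has length $2m$. Thus $G$ is disconnected, no Eulerian circuit exists, and no such U-cycle exists.
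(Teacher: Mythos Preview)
The paper does not actually prove Theorem~3; it is quoted in the introduction as a result of Bechel and LaBounty-Lay \cite{ab}, so there is no in-paper argument to compare against. That said, your proposal is sound and in the same Eulerian-digraph spirit the paper uses throughout. A few comments:

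Your interpretation of the class $\mathcal{C}_k$ (both compositions together) matches the paper's example $011010$ for $k=3$, and your degree count is correct. For connectivity in the odd case, your structural analysis---balanced layer $=$ disjoint union of rotation cycles, excursions realise the move ``slide the $0$ just after a maximal $1$-run to just before it''---is correct and elegant; the block-merging induction does reduce every balanced necklace to that of $0^m1^m$, and the anti-automorphism $u\mapsto \bar{u}^R$ (which fixes $0^m1^m$ and reverses arcs) then yields strong connectivity. One small omission: you should state explicitly that the non-balanced vertices with $m\pm1$ zeros reach $0^m1^m$ because their forced out-paths land in the balanced layer (you observe this earlier but do not tie it back into the final ``every vertex reaches $0^m1^m$'' claim). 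Your negative argument for even $k$ is fine; note that, as you hint, the theorem as stated fails for $k=2$ (the string $01$ is a U-cycle of $\{01,10\}$), so ``degenerate'' there really means ``the assertion is false.''

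Stylistically, your connectivity proof is rather different from the paper's proofs of the surrounding results: the arguments for Theorems~16, 17, 21, 22 show weak connectivity by giving an explicit greedy procedure that rebuilds a fixed target word one symbol at a time, whereas you instead dissect the digraph into layers and necklace cycles and reduce connectivity to a combinatorial move on necklaces plus a symmetry. Your approach gives more insight into the structure of $G$ (and makes the $k$ even obstruction transparent), at the cost of a slightly longer argument than a direct ``walk to $0^m1^m$'' procedure would require here.
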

\noindent When $k=3$, Theorem 3 is exemplified by the cycle 011010.  An REU student, Arielle Leitner, proved the following four results in \cite{a}, including the following generalization of Theorem 3.
\begin{theorem}
U-Cycles of {\it equitable} $k$-letter words on $[n]$, i.e., words whose ``alphabet composition" is as evenly distributed as possible over $[n]$ exist iff $k\not\equiv0\pmod n$.
\end{theorem}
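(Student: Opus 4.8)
\noindent The plan is to run the standard ``transition (overlap) digraph'' argument --- the one behind de Bruijn's theorem and Theorems 1--3 above. Write $k = qn + r$ with $0 \le r < n$, so a $k$-letter word is equitable exactly when every letter occurs $q$ or $q+1$ times (necessarily $r$ of them occurring $q+1$ times). I would form the digraph $G$ whose vertices are the $(k-1)$-letter words over $[n]$ that occur as a length-$(k-1)$ factor of some equitable word, and whose arcs are the equitable $k$-letter words, with $w = w_1\cdots w_k$ directed from $w_1\cdots w_{k-1}$ to $w_2\cdots w_k$; then U-cycles of the equitable words correspond bijectively to Eulerian circuits of $G$. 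The key preliminary observation is that $G$ is \emph{always} balanced: for a vertex $v$, both its out-degree and its in-degree count the letters $i\in[n]$ for which appending (respectively prepending) $i$ to $v$ yields an equitable word, and that condition depends only on the content vector of $v$ and on $i$, not on the side. Hence the whole dichotomy is a connectivity statement: $G$ is Eulerian iff it is connected.

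For the negative direction I would assume $n \mid k$ (so $r = 0$, and equitable means every letter occurs exactly $q = k/n$ times) and suppose $s_0s_1\cdots s_{N-1}$ is a purported U-cycle, where $N = k!/(q!)^{n}$ is the number of equitable words. Comparing two consecutive length-$k$ windows, each containing every letter exactly $q$ times, forces $s_i = s_{i+k}$ for all $i$ (indices mod $N$); thus $s$ has period dividing $d := \gcd(N,k)\le k$, so the windows at positions $0$ and $d$ coincide --- contradicting the distinctness of all $N$ windows whenever $N > k$. Since $N > k$ for $n \mid k$ in every case except the degenerate $n = k = 2$ (which is in any event already subsumed by Theorem 3), no U-cycle can exist when $n \mid k$.

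For the positive direction I would assume $n \nmid k$, i.e.\ $r \ge 1$, and show $G$ is connected. When $q = 0$ the equitable words are exactly the injections $[k]\to[n]$ with $k<n$ and Theorem 1 already applies, so take $q \ge 1$. A short content count shows every vertex is one of two types: a \emph{full} vertex, whose content has $r-1$ letters at multiplicity $q+1$ and the rest at $q$ (these have out-degree $n-r+1\ge 2$), or a \emph{deficient} vertex, with $r$ letters at $q+1$, one at $q-1$, and the rest at $q$ (these have out-degree $1$, the lone arc being forced to top up the deficient letter). The structural claim I would establish is that following forced arcs out of a deficient vertex reaches a full vertex within at most $k-1$ steps: as long as one stays among deficient vertices the set of ``heavy'' letters (those of multiplicity $q+1$) is unchanged, so the first time the sliding window has a heavy letter at its front --- which happens no later than the first heavy position of the starting word --- that window is full. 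Since a balanced digraph has all of its weakly connected components strongly connected, it then suffices to connect the full vertices to one another, and here I would use the slack provided by $r \ge 1$ (a full vertex has at least two legal continuations) to steer any full vertex, by a sorting-type walk, to the canonically sorted equitable $(k-1)$-word $1^{a_1}2^{a_2}\cdots n^{a_n}$ (where $\{a_i\}$ is the equitable multiset), and then reverse such a walk into an arbitrary target. This makes $G$ connected and balanced, hence Eulerian, and the Eulerian circuit is the required U-cycle. The step I expect to be the real obstacle is exactly this last one --- packaging ``steer any full vertex to the sorted word'' as a clean routing/induction argument; by contrast the automatic balance and the deficient-to-full reduction reduce everything else to bookkeeping.
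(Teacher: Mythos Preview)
Your framework is exactly the one the paper uses. Note first that Theorem~4 itself is merely \emph{cited} in the introduction (from \cite{a}); the paper's own argument for the equitable case is the pair Theorems~21--22, since $a$--$b$ augmented words with $b=a+1$ are precisely the equitable words with $q=a$. Your ``full'' and ``deficient'' vertices are exactly the paper's ``onto'' and ``nearly onto'' vertices; your degree counts $n-r+1$ and $1$ match the paper's $r$-singleton count and forced-letter case; and your deficient-to-full reduction (follow forced arcs until a heavy letter falls off the front) is the same mechanism the paper exploits implicitly.

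Two remarks on where the write-ups diverge. First, your periodicity argument for the negative direction ($s_i=s_{i+k}$ forces period dividing $\gcd(N,k)$) is cleaner and more self-contained than what the paper does: Theorem~21 simply asserts that the ``doubly onto'' case cannot yield a U-cycle, deferring to~\cite{ab}. Second, the step you flag as ``the real obstacle'' --- steering an arbitrary full vertex to the sorted canonical word --- is precisely the step the paper spends its effort on. The paper does not package it as a clean induction; instead it gives an explicit greedy algorithm (build the target block $a_1a_1a_2a_2\cdots$ from the left, using the guaranteed second non-maximal letter to ``park'' whenever the growing prefix has cycled to the front and must be restarted), illustrated at length for $n=5$. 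So your sketch is correct but genuinely incomplete at exactly the point you anticipated, and the paper's contribution there is the concrete routing procedure rather than any new structural idea.
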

\begin{theorem}
U-Cycles of ``almost onto" functions (i.e., functions whose range excludes one point in the codomain) from $[n]$ to $[n]$ exist for $n\ge3$.  Likewise U-Cycles of non-bijections on $[n]$ exist.  (These are respectively U-Cycles of $n$-letter words on $[n]$ that either are missing one letter, or which do not contain all letters.)
\end{theorem}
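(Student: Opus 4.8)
The plan is to apply the transition-digraph method that underlies all the preceding theorems, treating the two assertions in parallel. For the ``almost onto'' statement, let $\mathcal A$ be the set of $n$-letter words over $[n]$ that use exactly $n-1$ distinct letters, and form the digraph $G$ whose vertices are the $(n-1)$-letter strings over $[n]$ occurring as a length-$(n-1)$ prefix (equivalently suffix) of some $w\in\mathcal A$ — these turn out to be exactly the $(n-1)$-strings with at least $n-2$ distinct letters — with an edge labelled $w$ from $w_1\cdots w_{n-1}$ to $w_2\cdots w_n$ for each $w\in\mathcal A$. A U-cycle of $\mathcal A$ is precisely an Eulerian circuit of $G$, so by the Eulerian criterion recalled in the abstract it suffices to show that $G$ is balanced and that the subgraph induced by its non-isolated vertices is connected. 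For the non-bijection statement, replace $\mathcal A$ by the set $\mathcal B$ of non-permutation $n$-words; then \emph{every} $(n-1)$-string is a vertex.

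The balance check is the routine part, and it is where the hypothesis $n\ge 3$ enters. For $G$ one verifies that no vertex is isolated and then, fixing a vertex $u$, splits according to whether the letters of $u$ are all distinct or a single letter repeats: a letter may be appended to (and, symmetrically, prepended to) $u$ exactly when it already occurs in $u$ in the first case, and exactly when it is one of the two letters missing from $u$ in the second, so in-degree and out-degree agree at every vertex (equal to $n-1$, resp.\ $2$). For $\mathcal B$ the same bookkeeping gives out-degree $=$ in-degree $=n$ at windows with at most $n-2$ distinct letters and $n-1$ at windows with $n-1$ distinct letters, again balanced, and every $(n-1)$-string is visibly non-isolated.

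The real work — and the step I expect to be the main obstacle — is strong connectivity, since bijective $n$-words are not available as transitions and a naive greedy path can therefore hit a dead end. I would prove it by fixing a convenient ``hub'' window and showing both that every vertex reaches the hub and that the hub reaches every vertex, using that relabelling $[n]$ is an automorphism of the digraph. For $\mathcal B$ the hub is the constant string $1\cdots1$: one reaches it from an arbitrary window by repeatedly appending $1$, the only thing to arrange being that the first appended letter does not complete a permutation (so append a letter already present first, if necessary); and from $1\cdots1$ one reaches an arbitrary target by feeding in its letters, routing through another constant string when needed to avoid the single forbidden, permutation-completing final transition. For $G$ I would take the all-distinct hub $1\,2\,\cdots(n-1)$ and navigate using the ``rotation'' move (append the current first letter) together with the moves that convert an all-distinct window into a one-repeat window and back, again steering around the absent bijective words by inserting short detours; the relabelling symmetry reduces the number of cases to check. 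The base case $n=3$ (and perhaps $n=4$) is likely handled most transparently by direct inspection of the digraph. Once connectivity is in hand, Euler's theorem finishes both parts.
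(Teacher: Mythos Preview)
Your proposal follows the standard transition-digraph template used throughout the paper, and the balance computations you give are correct. However, there is nothing to compare against here: Theorem~5 is not proved in this paper. It appears in the introductory survey section as one of four results attributed to Leitner~\cite{a}, with no argument supplied; the present paper's original contributions begin at Theorem~16. So the ``paper's own proof'' you are being asked to match does not exist in this document.

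That said, two remarks on the proposal itself. First, your vertex description for the almost-onto digraph is right, and the degree count ($n-1$ at all-distinct windows, $2$ at one-repeat windows) is correct; note that for $n=3$ these coincide, so every vertex has in- and out-degree~$2$ and the digraph is $2$-regular. Second, the connectivity sketch for the almost-onto case is thin. From a one-repeat vertex only two moves are available, and your phrase ``steering around the absent bijective words by inserting short detours'' hides the actual combinatorics: you should exhibit, for an arbitrary one-repeat window, an explicit sequence of legal appends that reaches the hub $12\cdots(n-1)$ (or some hub in its relabelling orbit). A clean way is to show first that any one-repeat window reaches \emph{some} all-distinct window in one step (append either missing letter), and then that the all-distinct windows form a strongly connected subgraph under the rotation and swap-via-repeat moves you mention; this second claim is where the real case analysis lives and deserves to be written out rather than asserted.
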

Competition rankings are ones in which ties are possible, leading to some ranks being eliminated.  For example, 21245 is a ranking of five contestants $A,B,C,D,E$ in which $B$ wins; $A$ and $C$ are tied for second place; third place is taken up by the tie for second; and $D,E$ rank 4 and 5 respectively.  Such ordered rankings clearly represent words with restrictions.
\begin{theorem} 
U-cycles of competition rankings for $n$ players exist for each $n\ge2$.
\end{theorem}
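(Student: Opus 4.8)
The plan is to obtain the U-cycle as an Eulerian circuit of the transition digraph $G_n$ associated with competition rankings, following the template of de Bruijn's theorem and of the earlier results quoted above. The vertices of $G_n$ are the words of length $n-1$ over $[n]$ that occur as a contiguous factor of some competition ranking for $n$ players, and for every competition ranking $w=w_1w_2\cdots w_n$ we include a directed edge from $w_1\cdots w_{n-1}$ to $w_2\cdots w_n$. An Eulerian circuit of $G_n$ uses each edge exactly once, and therefore spells out a cyclic word each of whose length-$n$ windows is a competition ranking, with every competition ranking appearing exactly once (their number is the $n$th ordered Bell/Fubini number). Since $G_n$ is finite, by the Euler criterion recalled in the introduction it is enough to show that $G_n$ is balanced and connected.

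Balancedness is immediate and is the ``free'' part of the argument: being a competition ranking depends only on the multiset of letters used, because $w$ is a competition ranking precisely when its letters, listed in nondecreasing order as $r_1\le\cdots\le r_n$, satisfy $r_1=1$ and $r_j\in\{r_{j-1},j\}$ for every $j\ge 2$, a condition that does not see the order of the positions. Hence, for any vertex $s$ of length $n-1$, prepending a letter and appending a letter to $s$ produce the same multiset, so $\deg^+(s)$ and $\deg^-(s)$ are both equal to the number of letters $b\in[n]$ for which the multiset of $s$ together with $b$ is a competition-ranking multiset; thus $G_n$ is balanced. (The same symmetry applied to word reversal shows $G_n$ is isomorphic to its reverse, so weak and strong connectivity coincide for it, and weak connectivity is all we shall need.)

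Connectivity is the substantive step, and the plan is to show that every vertex $s$ is joined by a directed path to the all-ones vertex $\mathbf 1=1^{n-1}$ (a legitimate vertex, since $1^n$ and $1^{n-1}n$ are both competition rankings). The natural strategy is a two-stage ``collapse''. First, using that $s$ is a factor of some competition ranking $R$ and that $R$ necessarily contains a $1$, we slide/rotate the window by legal transitions into a normalized configuration -- for instance one beginning with the letter $1$, or more generally the nondecreasing arrangement of its own multiset. Second, from such a configuration we repeatedly append the letter $1$: whenever the current window together with one more $1$ is again a competition-ranking multiset, appending $1$ is a legal transition and it lengthens the trailing run of $1$'s, so iterating drives the window to $1^{n-1}$. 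The crux -- and the only real obstacle -- is the bookkeeping that makes the first stage deliver a window from which the second stage actually runs to completion: a $1$ cannot be appended after an arbitrary window (for $n\ge 3$, no $1$ may follow $1^{n-2}(n-1)$, for example), so one must describe the successor sets explicitly -- they are small and governed by the underlying composition of $n$ -- and choose the normalization so that the collapse never stalls. Once $G_n$ is known to be connected, it is a connected balanced digraph, hence Eulerian, and any Eulerian circuit is the desired U-cycle; the cases $n=2$ (the cycle $112$) and $n=3$ (a cycle of length $13$) serve both as sanity checks and as the base of the reduction.
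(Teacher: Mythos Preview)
The paper does not actually prove Theorem~6; it is quoted from Leitner--Godbole \cite{a} as part of the survey in the introduction, so there is no in-paper proof to compare against line by line. Your overall strategy---build the transition digraph on length-$(n-1)$ factors, check balance, check connectivity, invoke the Euler criterion---is precisely the template used throughout this paper and is almost certainly the method of \cite{a} as well.

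Your balancedness argument is correct and nicely put: the property ``is a competition ranking'' depends only on the multiset of letters, so prefixing and suffixing are symmetric and $\deg^+=\deg^-$ at every vertex.

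The connectivity argument, however, is not a proof but a plan, and you acknowledge this yourself (``The crux---and the only real obstacle---is the bookkeeping\ldots''). You propose to collapse every vertex to $1^{n-1}$ by normalizing and then repeatedly appending $1$'s, but your own example shows the second stage can stall (from $1^{n-2}(n-1)$ one cannot append a $1$), and you never exhibit the workaround. In fact the out-neighbourhoods here are quite constrained---from $1^{n-1}$ only $1$ and $n$ are legal successors, and from $12\cdots(n-1)$ only $n-1$ and $n$ are---so the ``describe the successor sets explicitly and choose the normalization so that the collapse never stalls'' step is where the real content lies, and it is missing. As written, the proposal establishes balance but leaves connectivity at the level of an outline; to complete it you would need either an explicit terminating rewriting procedure on windows, or a different choice of hub vertex together with a concrete path to it from an arbitrary vertex.
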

The work of Horan and Hurlbert \cite{hh} is related to but distinct from Theorem 6.  

{\it Strong passwords} are length $k$ words on $[n]$ such that $[n]=\cup_{j=1}^qA_i; A_i\cap A_j=\emptyset$, and the word contains at least one element from each $A_i$.
\begin{theorem} U-Cycles of strong passwords as defined above exist as long as $k\ge 2q$.
\end{theorem}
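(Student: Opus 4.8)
The plan is to realize the strong passwords of length $k$ as the edges of a transition digraph $G$ and to prove $G$ is connected and balanced; then $G$ is Eulerian, and reading off any Eulerian circuit as a cyclic word yields a U-cycle. Take the vertices of $G$ to be the length-$(k-1)$ words over $[n]$ that occur as a window of some strong password --- these turn out to be exactly the length-$(k-1)$ words whose letter set meets all but at most one of the blocks $A_1,\dots,A_q$ --- and let each strong password $w_0w_1\cdots w_{k-1}$ contribute the edge $w_0w_1\cdots w_{k-2}\ra w_1w_2\cdots w_{k-1}$. Every vertex is then incident to at least one edge, since a vertex missing exactly the block $A_j$ can be completed to a strong password by appending $t_j\in A_j$, and a vertex missing no block can be completed by appending any letter.

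Balance is immediate and needs no hypothesis on $k$: whether a word is a strong password depends only on the \emph{set} of letters it uses, not on their positions. Hence for a vertex $v$ the number of letters $c$ for which $cv$ is a strong password equals the number for which $vc$ is a strong password --- both equal $|A_j|$ when $v$ misses exactly the block $A_j$, and both equal $n$ when $v$ misses no block --- so the in-degree of $v$ equals its out-degree. Since $G$ is balanced and has no isolated vertices, it remains only to prove that $G$ is (weakly) connected.

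Connectivity is the crux, and this is where $k\ge 2q$ enters (the combinatorics in fact only needs $k>q$, but $k\ge 2q$ leaves ample room). Fix representatives $t_i\in A_i$ and set $T=\{t_1,\dots,t_q\}$; the goal is to show that every vertex is connected to a vertex using only letters of $T$, in three moves. (i) From an arbitrary vertex follow forced edges: whenever the current window misses a block $A_j$, append $t_j$ --- the edge condition forbids appending anything outside $A_j$. An appended copy of $t_j$ remains in the window for the next $k-1$ steps and so protects $A_j$, so the successively missed blocks are distinct; after at most $q$ forced steps one has recorded $q$ distinct missed blocks and appended all of $T$, and the current window is ``full'' (misses no block). (ii) From a full window, always append $t_j$, where $A_j$ is the block containing the leading letter; this edge is legal and the new window is again full, so after $k-1$ steps the window uses only letters of $T$ --- and uses all of them, because the starting full window met every block. (iii) The key observation: the sub-digraph of $G$ induced on the $T$-letter vertices is precisely the transition digraph used for U-cycles of onto functions $[k]\ra[q]$, which, since $k>q$, has an Eulerian circuit by Theorem 2 and is therefore connected. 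Composing (i)--(iii) shows any two vertices of $G$ lie in one weakly connected component.

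With $G$ connected and balanced it is Eulerian, and an Eulerian circuit, read off in the standard way, is a U-cycle of the strong passwords. The main obstacle is the connectivity step --- in particular the bookkeeping in moves (i) and (ii): verifying that each intermediate window is a genuine vertex and that the prescribed letter is always an admissible continuation, and then recognizing that move (iii) reduces the remaining work cleanly to the onto-function case already settled in Theorem 2.
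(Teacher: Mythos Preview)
The paper does not actually prove Theorem 7: it appears in the introductory survey of prior work (Section 1) and is attributed to Leitner--Godbole \cite{a}, with no proof supplied in this paper. So there is no ``paper's own proof'' to compare against here.

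That said, your argument is correct and is exactly the kind of Eulerian-digraph proof one would expect. The identification of the vertex set as the length-$(k-1)$ words missing at most one block is right, and the balance count ($|A_j|$ or $n$, symmetric in prefix/suffix) is immediate. Your three-move connectivity argument is clean: in (i) the successively missed blocks are distinct because each appended $t_j$ survives in the window for $k-1$ further steps and $q\le k-1$; in (ii) replacing the leading letter by the representative of its own block preserves fullness; and (iii) is a tidy reduction to the onto-functions digraph of Theorem 2. One small wording issue in (i): you write ``appended all of $T$'', but the forced phase may terminate early with only some of the $t_j$ appended --- what matters, and what you do correctly conclude, is simply that the terminal window is full.

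Your parenthetical observation is also right: the argument uses only $k>q$, both to keep $t_{j_0}$ in the window through at most $q$ forced steps and to invoke Theorem 2 in step (iii). The hypothesis $k\ge 2q$ is stronger than what your proof needs.
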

Graphs on $k$ labeled vertices are, after all, words of length ${k\choose 2}$ on $\{0,1\}$ and should thus admit a U-Cycle by deBruijn's theorem.  However in \cite{bks}, 2008 REU students Brockman, Kay and Snively proved that
\begin{theorem}
There is a U-Cycle of all graphs on $k$ vertices that uses just the alphabet $[k]$ rather than $\left[{k\choose 2}\right]$.  
\end{theorem}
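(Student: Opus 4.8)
The plan is to follow the Eulerian-circuit template that underlies every result quoted above. Fix the number of vertices $k$, write ${\mathcal G}_k$ for the set of all $2^{{k\choose 2}}$ labeled graphs on $[k]$, and aim to produce a window length $w$ together with an injection $\phi:{\mathcal G}_k\to[k]^w$ such that the \emph{transition digraph} $D$ is connected and balanced. Here $D$ has as vertices all length-$(w-1)$ words that occur as the first $w-1$ or the last $w-1$ letters of some $\phi(G)$, and it has one arc $x_1\cdots x_{w-1}\to x_2\cdots x_w$ for each $x_1\cdots x_w$ in the image $S=\phi({\mathcal G}_k)$. Because a connected digraph with matching in- and out-degrees is Eulerian, such a $D$ has an Eulerian circuit; reading off one letter per arc produces a cyclic word of length $|S|=|{\mathcal G}_k|$ every length-$w$ window of which lies in $S$ and occurs exactly once, and applying $\phi^{-1}$ turns this into a U-cycle of the graphs on $[k]$ over the alphabet $[k]$ (the ``$[{k\choose 2}]$'' being the alphabet one would use to list a graph by its edge-labels instead). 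Since $2^{{k\choose 2}}\le k^{{k\choose 2}}$ for $k\ge 2$, suitable $w$ and $\phi$ certainly exist on cardinality grounds; the whole game is arranging for $D$ to be balanced.

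The obvious candidate for $\phi$ — and the first thing I would try — is to fix an ordering $e_1,\dots,e_{{k\choose 2}}$ of the pairs of $[k]$, write $e_p=\{a_p,b_p\}$ with $a_p<b_p$, take $w={k\choose 2}$, and let the $p$-th letter of $\phi(G)$ be $b_p$ when $e_p\in E(G)$ and $a_p$ when $e_p\notin E(G)$. This $\phi$ is injective, uses only the letters $[k]$, and has image of the required size $2^{{k\choose 2}}$. Unfortunately it does not balance $D$ for any ordering of the edges: a word $v$ of length $w-1$ is a prefix of a member of $S$ precisely when its $m$-th letter lies in $\{a_m,b_m\}$ for every $m$, but is a suffix of a member of $S$ precisely when its $m$-th letter lies in $\{a_{m+1},b_{m+1}\}$ for every $m$; since consecutive $e_m,e_{m+1}$ are distinct pairs these two families of words cannot coincide, so some vertex of $D$ ends up with out-degree $2$ and in-degree $0$ (or the reverse).

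The step I expect to be the crux is therefore repairing the encoding so that balance holds, and I see two natural routes. One is to keep the ``$a_p$ or $b_p$'' idea but pad $\phi(G)$ with extra coordinates carrying a fixed periodic string of marker letters: transitions through the marker coordinates become forced, and the branching coordinates can be set up to look the same after a cyclic shift, which is what makes in- and out-degrees agree. The other, cleaner when the numbers cooperate, is to choose $w$ and a \emph{cyclically invariant} list of local forbidden patterns whose set of solutions in $[k]^w$ has size exactly $2^{{k\choose 2}}$; cyclic invariance of the pattern list forces $D$ to be balanced automatically, and one is left only to biject those solution words with ${\mathcal G}_k$. Small cases show both the promise and the arithmetic obstacle: for $k=4$ one has $4^3=2^6$, so $S$ can be taken to be all of $[4]^3$ and the cycle is literally a deBruijn cycle; for $k=3$, forbidding the block $33$ leaves exactly $2^3=8$ words over $[3]$ and a two-line check produces the cycle; but for general $k$ the count $2^{{k\choose 2}}$ is not a power of $k$, so one must work to present it as such a solution set (or fall back to the padding construction). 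Once any balanced encoding is secured, connectivity of $D$ is the easy part — from any vertex one reaches a fixed base word by toggling one coordinate at a time — and the Eulerian-circuit fact quoted in the introduction then finishes the proof.
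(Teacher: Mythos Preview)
This theorem is not proved in the present paper; it is quoted in the introduction as a result of Brockman, Kay, and Snively, so there is no in-paper proof to compare against.  Judged on its own, your proposal has a genuine gap: it never actually produces a balanced encoding for general $k$.

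You set up the transition-digraph framework correctly, you correctly diagnose that the naive ``$a_p$ or $b_p$'' encoding fails to balance (the prefix and suffix constraints on a length-$(w-1)$ word refer to shifted, hence different, edge-pairs), and you correctly identify securing balance as the crux.  But neither of your two proposed repairs is carried out.  The padding route is only a slogan (``transitions through the marker coordinates become forced, and the branching coordinates can be set up to look the same after a cyclic shift''); no concrete marker scheme is given, and in fact fixed markers at fixed positions typically destroy the very symmetry you need, because after a one-step shift the marker sits at a different position and the prefix/suffix vertex sets no longer coincide.  The cyclically-invariant-pattern route you explicitly abandon for general $k$ (``one must work to present it as such a solution set'').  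The closing sentence ``once any balanced encoding is secured, the rest is easy'' is true, but the balanced encoding \emph{is} the content of the theorem; everything else is the standard Euler-circuit boilerplate already in the introduction.

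Your small cases are fine and even suggestive: $k=4$ is a genuine coincidence ($4^{3}=2^{6}$), and the $k=3$ trick of deleting the single word $33$ is a clean ad hoc fix.  But these do not point to a uniform construction, and the proposal as written is a plan rather than a proof.  What is missing is an explicit, shift-compatible map $\phi:\mathcal G_k\to[k]^{w}$ for which one can verify, vertex by vertex, that in-degree equals out-degree; the Brockman--Kay--Snively paper supplies exactly such a structured encoding, and that structural choice (not a counting or padding argument) is what makes balance automatic.
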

In addition, they proved that  
\begin{theorem}
On $k$ vertices, trees, graphs with $m$ edges, graphs with loops, graphs with multiple
edges (with up to $m$ duplications of each edge), directed graphs, hypergraphs, and
$r$-uniform hypergraphs all admit U-Cycles.
\end{theorem}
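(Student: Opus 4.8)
The plan is to push each class in the statement through the machine the paper uses throughout: encode the objects as the words of some fixed length $\ell$ over a fixed finite alphabet $\Sigma$; form the transition digraph whose vertices are the length-$(\ell-1)$ words that occur (as a prefix or a suffix) and which has one arc $a_1a_2\cdots a_{\ell-1}\to a_2a_3\cdots a_\ell$ for each admissible word $a_1a_2\cdots a_\ell$; and check that this digraph is connected and balanced. It is then Eulerian, and reading off an Euler circuit gives the U-cycle. What makes most of the list routine is that, unlike in the preceding theorem, there is no demand here to minimize $\Sigma$, so a convenient coordinate system may be chosen freely. Fix an enumeration $s_1,\dots,s_\ell$ of the relevant ``slots'' --- the ${k\choose 2}$ unordered pairs for graphs, the $k(k-1)$ ordered pairs for digraphs, the ${k\choose 2}+k$ pairs-with-loops for graphs with loops, the ${k\choose r}$ $r$-subsets for $r$-uniform hypergraphs, and the $2^k$ subsets (or however many survive size restrictions on the hyperedges) for hypergraphs --- and record an object by its characteristic vector in $\{0,1\}^\ell$; for multigraphs with up to $m$ copies of each edge, record the multiplicity vector in $\{0,1,\dots,m\}^{{k\choose 2}}$ instead. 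In each of these classes nothing ties the slots together, so \emph{every} word over $\Sigma$ occurs, the transition digraph is the full de Bruijn graph on $\Sigma^{\ell-1}$ --- connected, with in-degree equal to out-degree everywhere --- and de Bruijn's theorem hands over the U-cycle with no further work.

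Trees need one extra idea, since a tree on $[k]$ has exactly $k-1$ edges and so is \emph{not} a free word of edge-indicators. Here the device is the Pr\"ufer correspondence: labeled trees on $[k]$ are in bijection with \emph{all} words of length $k-2$ over the alphabet $[k]$. Encoding a tree by its Pr\"ufer sequence, a de Bruijn U-cycle of the length-$(k-2)$ words over $[k]$ transports, along this bijection, to a cyclic listing of the $k^{k-2}$ labeled trees in which consecutive trees have Pr\"ufer codes overlapping in $k-3$ letters --- that is, to a U-cycle of trees. The only point needing care is the classical fact that the Pr\"ufer map is indeed the bijection claimed.

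The genuine obstacle is graphs on $[k]$ with exactly $m$ edges. Encoding such a graph by its edge set turns this class into the family of $m$-element subsets of the $N:={k\choose 2}$-element slot set, and pinning a combinatorial parameter this way is exactly the move that voided the balance condition --- and with it the U-cycle --- for balanced binary words and for equitable words in the theorems recalled above; it also puts us next to the Chung--Diaconis--Graham problem of U-cycles of $m$-subsets of an $n$-set, a setting in which the balance condition is genuinely delicate. Indeed, in the naive model the transition digraph is \emph{not} balanced: sliding a window through a sorted edge list forces the newly appended entry to exceed the current maximum, so a state's out-degree counts its ``large'' continuations while its in-degree counts its ``small'' predecessors, and the two generally disagree. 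The plan is therefore to engineer a tailored balanced digraph rather than trust the obvious one --- pass to a subtler reformulation of an $m$-edge graph, built perhaps from the standard shift $a_i\mapsto a_i-i$ (which turns a sorted edge list into an $m$-multiset over an $(N-m+1)$-letter alphabet) or from the internal symmetry of $K_k$, arranged so that every state is forced to have equal in- and out-degree; then verify that this digraph is connected; and only then invoke the Eulerian criterion. Producing that balanced model --- and, quite possibly, finding that it exists only under an arithmetic condition relating $m$ and $k$, in the spirit of the equitable-words theorem --- is the step I expect to absorb essentially all of the difficulty. With it in hand, the rest of the theorem is bookkeeping around de Bruijn's theorem and the Pr\"ufer bijection.
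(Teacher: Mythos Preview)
The paper does not prove this theorem; it is quoted from \cite{bks} as background, with no argument supplied here. So there is no ``paper's own proof'' to compare against, and I can only assess your proposal on its own terms.

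There is a genuine gap: you do not prove the case of graphs with exactly $m$ edges. You correctly dispatch the unconstrained classes (loops, multigraphs, digraphs, hypergraphs, $r$-uniform hypergraphs) by observing that their coordinate encodings fill out an entire $\Sigma^\ell$ and hence fall to de~Bruijn's theorem, and the Pr\"ufer-code device for trees is correct and pleasant. But for the $m$-edge case you only sketch a plan---list some candidate transforms, note that the naive model is unbalanced---and explicitly concede that building the balanced model ``is the step I expect to absorb essentially all of the difficulty.'' That is not a proof. Your instinct that the case is delicate is well-founded: under the characteristic-vector encoding, graphs with $m$ edges are binary words of length ${k\choose 2}$ and weight exactly $m$, and Theorem~3 of this very paper already shows such words admit no U-cycle when $m={k\choose 2}/2$. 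So a different representation is indeed mandatory, and you have not supplied one; the actual construction in \cite{bks} would need to be consulted or reinvented before the argument closes.

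One further caution. Theorem~9 sits immediately after Theorem~8, whose entire point is that the alphabet can be taken to be $[k]$ rather than $\left[{k\choose 2}\right]$. If Theorem~9 is intended in the same spirit---and that is how \cite{bks} frames it---then your ``pick any convenient $\Sigma$ and invoke de~Bruijn'' shortcut for the easy classes, while it does produce \emph{some} U-cycle, may also miss the intended content.
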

As noted by RET teachers Champlin and Tomlinson in \cite{champlin}, the fact that ${\cal A}$ admits a U-Cycle does not imply that ${\cal A}^C$ does.  However, they proved, in a series of results similar in spirit to Theorem 5, and which complemented Theorems 1, 2, 6, and 7 (and using the notation from those theorems) that \begin{theorem}\ 

(a) There exists a U-Cycle of non-injective functions if $k\ge4$.  

(b) There exists a U-Cycle of non-surjective functions if $k\ge n>2$.  

(c) There exists a U-Cycle of illegal rankings if $n\ne 1,3$.

(d) There exists a U-Cycle of non-strong passwords if $q\ge 3$.
\end{theorem}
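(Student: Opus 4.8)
The common framework is the one already used for every theorem above. For a class $\mathcal C$ of length-$k$ words over the relevant alphabet, let $G(\mathcal C)$ be the transition digraph whose vertices are the length-$(k-1)$ words occurring as a prefix or a suffix of some member of $\mathcal C$, with one edge $w_1w_2\cdots w_{k-1}\ra w_2w_3\cdots w_k$ for each word $w_1w_2\cdots w_k\in\mathcal C$. A U-cycle of $\mathcal C$ is exactly an Eulerian circuit of $G(\mathcal C)$, so by the baseline theorem it suffices, in each of (a)--(d), to show that $G(\mathcal C)$ is balanced and connected on its non-isolated vertices.

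Balance comes essentially for free. In each part $\mathcal C$ is the complement, inside the set of all length-$k$ words over the appropriate alphabet, of the ``good'' class of Theorem 1, 2, 6, or 7 respectively; call it $\mathcal C'$. The crucial observation is that membership of a word in $\mathcal C'$ depends only on the multiset of its letters: ``all letters distinct'' (injective words), ``the letter set is everything'' (onto words), ``every value $r$ that occurs has exactly $r-1$ smaller entries'' (competition rankings), and ``the letter set meets every block'' (strong passwords). Hence at any length-$(k-1)$ word $v$ the number of words of $\mathcal C'$ with prefix $v$ equals the number with suffix $v$ -- both count the single letters one may adjoin to the multiset of $v$ -- so $G(\mathcal C')$ is balanced at every vertex of the de Bruijn digraph $B$ on length-$(k-1)$ words. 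Since $B$ is balanced and $G(\mathcal C)$ arises from $B$ by deleting exactly the edges of $G(\mathcal C')$, the digraph $G(\mathcal C)$ is balanced too.

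The content of the theorem is therefore connectivity, and this is where I expect the work, and the role of the hypotheses, to lie. For each class I would fix a ``hub'' vertex of high multiplicity -- a length-$(k-1)$ word $xx\cdots x$ for (a), a word using exactly one element of the omitted block for (d), and similar witness-laden words for (b) and (c) -- and show that every non-isolated vertex both reaches and is reached from the hub by an explicit short path of legal one-letter shifts through bad words. The hypotheses are exactly what make such paths exist: $k\ge4$ leaves room to keep a repeated letter inside the sliding length-$(k-1)$ window while the rest of the word is rewritten; $k\ge n>2$ ensures a word missing a fixed letter still has at least two letters to maneuver with, and $n=2$ genuinely fails since then the non-surjective words are the two constant words, which do not overlap; $q\ge3$ ensures a word avoiding block $A_i$ still draws on $q-1\ge2$ other blocks, whereas $q\le2$ splits the bad words into two non-communicating families; and for rankings $n\ne1,3$ is forced, the illegal rankings being empty when $n=1$ and, when $n=3$, splitting into the components $\{112,121,211\}$ and everything else. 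I would handle (a)--(d) in that order, quoting the balance argument once and redoing only the hub-reachability construction. The main obstacle is the connectivity bookkeeping together with verifying the thresholds are sharp; I expect the rankings case and the small-parameter checks to be the fussiest, since there the bad class is small and the disconnection is real.
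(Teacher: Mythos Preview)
This theorem is quoted in the Introduction as a result of Champlin and Tomlinson \cite{champlin}; the present paper gives no proof of it, so there is no in-paper argument to compare your proposal against.

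On its own merits your outline is sound and is almost certainly the intended route. The balance step is complete and correct: each of the four ``good'' classes (injective, onto, legal rankings, strong passwords) is multiset-determined, so deleting its edges from the full de~Bruijn digraph preserves balance, and hence the complementary class has a balanced transition digraph. Your diagnosis of the thresholds is also right --- in particular your reading of the $n=3$ rankings case, where the three words $112,121,211$ form an isolated $3$-cycle in the transition digraph, and of the $q\le 2$ password case, where the bad words split by which block they avoid.

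What remains is genuinely the work: you have not yet executed the hub-reachability arguments, and the connectivity casework for (c) (illegal rankings for general $n\ge 4$) is less mechanical than the others because the structure of ``illegal'' is defined negatively and not by a single forbidden feature. Your plan to anchor each case at a high-multiplicity constant or near-constant vertex is the right one, but a referee would want to see the explicit paths (or at least a clean inductive shift argument) for each of (a)--(d), together with the small-$k$ and small-$n$ checks that pin down why the stated thresholds are exactly where the graph becomes connected.
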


The work of Antonio Blanca \cite{blanca} moved the agenda in a different direction.  First, he generalized Theorem  3 by proving that 

\begin{theorem}
U-Cycles of binary words of weight between $s$ and $t$, where $1\le s<t\le k$, exist; using the binary coding, these are the same as U-Cycles of subsets of size in the interval $[s,t]$.
\end{theorem}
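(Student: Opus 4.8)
The plan is to build the usual transition digraph and verify that it is Eulerian. Let $G$ be the digraph whose vertices are the binary strings of length $k-1$ whose number of ones ("weight") lies in $\{s-1,s,\dots,t\}$, with one directed edge $a_1\cdots a_{k-1}\ra a_2\cdots a_k$, labelled by the length-$k$ word $a_1\cdots a_k$, for each length-$k$ word of weight in $[s,t]$. As in all the results above, U-cycles of the target family correspond exactly to Eulerian circuits of $G$, so by deBruijn's criterion it suffices to check that $G$ is balanced and connected.

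Balance is immediate: for a vertex $v$ of weight $j$, the word obtained by appending a bit $b$ to $v$ and the word obtained by prepending $b$ to $v$ both have weight $j+b$, so the number of admissible edges leaving $v$ equals the number of admissible edges entering it, namely $|\{\,b\in\{0,1\}:j+b\in[s,t]\,\}|$ ($=2$ when $s\le j\le t-1$ and $=1$ when $j\in\{s-1,t\}$). In particular every vertex has positive degree and every admissible length-$k$ word joins two vertices of $G$, so $G$ is exactly the graph we want.

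The work is connectivity. Since a weakly connected balanced digraph is automatically strongly connected, it suffices to show that every vertex reaches the fixed vertex $c:=0^{\,k-1-s}1^{\,s}$. I would argue in two stages. First, reduce the weight to exactly $s$: a vertex of weight $s-1$, or a vertex of weight $t$ (the latter occurring only when $t\le k-1$), has a unique outgoing edge, and iterating this forced move shifts a $0$, respectively a $1$, to the front of the string and then off it, landing in weight $s$, respectively weight $t-1$; a vertex of interior weight $j$ with $s<j\le t-1$ may instead be left-rotated freely (appending its own first bit is always admissible there) until a $1$ leads, and then have a $0$ appended, dropping its weight by one. Iterating, every vertex of $G$ reaches a vertex of weight $s$. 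Second, sort a weight-$s$ vertex to $c$: left-rotations are free inside the interior class, and a leading block "$10$" can be converted to "$01$" by the short detour $10\gamma\ra 0\gamma 0\ra \gamma 01 \ra\cdots\ra 01\gamma$ (append $0$; take the forced append-$1$ out of weight $s-1$; then rotate), whose edges all have weight in $\{s,s+1\}\subseteq[s,t]$. Free rotations together with this admissible adjacent transposition allow the cyclic runs of ones to be merged one at a time, producing $c$ after finitely many steps.

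The main obstacle is this connectivity argument, and within it the rigidity of the two extreme weight classes $s-1$ and $t$: each of their vertices has a single entering and a single leaving edge, so any detour that passes through weight $s-1$ (as the transposition above does) has to be set up so that the forced moves deliver exactly the vertex one wants next, and keeping track of which string one actually lands on is where the bookkeeping lives. Once connectivity is established, deBruijn's Eulerian criterion finishes the proof; the degenerate case $k=2$ (forcing $s=1$, $t=2$), where the detour above is unavailable, is handled directly by the cycle $0110$.
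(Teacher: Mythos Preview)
The paper does not actually prove this theorem: it is quoted as a result of Blanca \cite{blanca} (see the sentence introducing Theorem~11) and no argument is supplied in the paper itself. So there is nothing in the paper to compare your proposal against; what you have written is a standalone proof, and the approach---the standard transition digraph, balance from the symmetric weight count $j\mapsto j+b$, and connectivity by first reducing the weight to $s$ and then sorting to a canonical vertex via rotations plus a $10\to 01$ detour through weight $s-1$---is exactly the kind of argument one expects here and is essentially correct.

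Two small corrections. First, your explicit $k=2$ cycle $0110$ is wrong: there are only three target words ($01$, $10$, $11$), so a U-cycle must have length $3$, and reading $0110$ cyclically produces $00$. The correct cycle is $011$. In fact no special case is needed at all: for $k=2$ the unique weight-$s$ vertex of length $k-1=1$ is $1=c$, so your general argument already covers it once you observe that the ``sort to $c$'' stage is vacuous. Second, the last step of your connectivity argument (``free rotations together with this admissible adjacent transposition allow the cyclic runs of ones to be merged'') deserves one more line: conjugating the leading $10\to 01$ swap by rotations gives the bubble-sort move $10\to 01$ at \emph{every} position, and bubble sort strictly decreases the number of inversions, terminating at $c=0^{k-1-s}1^{s}$.
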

Blanca also proved an extension of Theorem 11 for arbitrary alphabet sizes and words on that alphabet with weight in a specified range.  He proved results on Sperner families and chains of subsets, in work that was generalized in Theorem 13 below.  Most relevant to this paper, however, he proved a result on lattice paths of length $n$ that we will extend in Section 3.
\begin{theorem}  There exists a U-Cycle of lattice paths of length $n$ on the rectangular integer lattice in the plane, which begin at the origin, and consisting of steps of moves in the 4 standard directions $\{N, S, E, W\}$, and which end up at a distance $\le k$ from the origin.
\end{theorem}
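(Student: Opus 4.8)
\medskip
\noindent\textbf{Proof plan.} The plan is to set up the usual transition digraph and show it is Eulerian. First I would encode a length-$n$ lattice path by the word $w=w_1w_2\cdots w_n\in\{N,S,E,W\}^n$ recording its steps, and record that its endpoint $\mathrm{end}(w)=(\#_E(w)-\#_W(w),\ \#_N(w)-\#_S(w))$ depends only on the multiset of letters of $w$; in particular any rearrangement of the letters of $w$ has the same endpoint. Call $w$ \emph{valid} when $\mathrm{end}(w)$ lies within distance $k$ of the origin. Then, mimicking the proof of de Bruijn's theorem, I would build the digraph $G$ whose vertices are the length-$(n-1)$ words occurring as a prefix or suffix of some valid word, with one edge $w_1\cdots w_{n-1}\ra w_2\cdots w_n$ for each valid word $w$. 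An Eulerian circuit of $G$, read off cyclically as in de Bruijn's theorem, is exactly a U-cycle of the valid length-$n$ paths, so it suffices to show that $G$ is connected and balanced.

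Balance should be immediate, and this is where the encoding pays off. For a window $v$, the out-edges at $v$ correspond to the letters $c$ with $vc$ valid and the in-edges to the letters $c$ with $cv$ valid; but $vc$ and $cv$ use the same letters, hence have the same endpoint, hence the same validity, so $\mathrm{indeg}_G(v)=\mathrm{outdeg}_G(v)$ for every vertex $v$. In particular every vertex of $G$ has positive in- and out-degree, so there are no spurious isolated vertices. Thus $G$ is balanced, with no hypothesis on $n$ or $k$ needed.

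The substantive part is connectedness. Since $G$ is balanced it suffices, by the Eulerian criterion recalled in the Introduction, to show the underlying undirected graph of $G$ is connected, and I would in fact aim to prove $G$ strongly connected. I would first note that a window $v$ can occur only when $\mathrm{end}(v)$ is within distance $k+1$ of the origin, since appending one further step must bring the endpoint into the disk of radius $k$. Next I would fix a canonical window $v^\star=EWEW\cdots$ of length $n-1$ (endpoint $(0,0)$ or $(1,0)$) and show every window $v$ is joined to $v^\star$ by a directed walk: run a walk that appends new letters $c_1,c_2,\ldots$ one at a time, so that after $n-1$ steps the window consists exactly of the appended letters and can be made to read $v^\star$. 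Re-appending the step just shifted out of the window leaves the current edge-word's endpoint unchanged, so one can always stay valid, while making an occasional different (still valid) choice lets the running endpoint drift one unit at a time; because every relevant endpoint lies within distance $k+1$ of the origin and $k\ge 1$, there is room to steer the endpoint to the origin and then to spell out $v^\star$ keeping every edge-word inside the disk of radius $k$. Finally, since the reverse of a valid word is again valid, $v^\star$ likewise reaches every window, so $G$ is strongly connected; being connected and balanced, $G$ is Eulerian, and any Eulerian circuit yields the asserted U-cycle.

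I expect the geometric bookkeeping in the connectedness step to be the main obstacle: one must check that from a window whose endpoint sits on or near the boundary of the radius-$k$ disk there is always a valid next step making progress toward $v^\star$ without the partial path's endpoint leaving the disk, which requires a short case analysis on the direction of the leading step relative to the current displacement (together with care about the delay-$(n-1)$ feedback between which letter is appended and which is later shifted out). The degenerate regimes also need separate handling: for $k$ large enough that the distance constraint is vacuous the statement is just de Bruijn's theorem, whereas for $k=0$ the valid paths are the closed ones and small $n$ genuinely fails (for instance $n=2$ gives the disconnected graph $E\leftrightarrow W$, $N\leftrightarrow S$), so the clean statement should be read with $k\ge 1$ and $n$ not too small.
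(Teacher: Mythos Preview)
Your proposal is correct and follows essentially the same route as the paper. Note that Theorem~12 itself is cited from \cite{blanca} without proof here; the paper's own argument appears only for the three-dimensional analogue (Lemmas~23--24 and Theorem~25), and it matches your plan: the same transition digraph, the same one-line balance argument from the fact that $vc$ and $cv$ share a multiset of steps, and a connectedness proof that steers every vertex to a canonical one near the origin.

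The one organizational difference worth noting is how the boundary bookkeeping you flag as ``the main obstacle'' is handled. The paper breaks connectedness into two stages: first (Lemma~23) it shows every vertex reaches some vertex of \emph{full} degree (endpoint strictly inside $\Pi_{k-1}$), by cycling the word until an over-represented letter sits in front and then replacing it by its opposite, dropping the distance by~$2$; second (Lemma~24), from a full-degree vertex any single letter may be appended, so arbitrary adjacent swaps are available and one can reorder and rebalance freely to reach the canonical vertex. This two-step structure cleanly separates ``escape the boundary'' from ``rearrange at will,'' and it sidesteps exactly the delay-$(n{-}1)$ feedback you worried about. Your direct steering argument works too, but if you carry it out you will likely find yourself reinventing that decomposition; your key observation that re-appending the letter just shifted out keeps the current edge-word's multiset (hence its validity) is precisely the ``cycling'' move the paper exploits. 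Your caveats about $k=0$ and very small $n$ are also on target; the paper imposes $n\ge k+1\ge 4$ in the three-dimensional statement for the same reason.
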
 \noindent Note that this result { is} about words on a 4-letter alphabet in which the composition of the word is restricted by the equation
\[\vert\#N-\#S\vert+\vert\#E-\#W\vert\le k.\]

Graduate students Bill Kay and Andre Campbell \cite{campbell} proved results on words with weight in a range that improved the above-mentioned results of Blanca \cite{blanca}, but they also extended deBruijn's theorem in a different direction:
\begin{theorem}
U-cycles exist for the assignment
of elements of $[n]$ to the sets in any labeled subposet of
the Boolean lattice; de Bruijn's theorem corresponds to the case when
the subposet in question consists of a single ground element.
\end{theorem}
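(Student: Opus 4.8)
The plan is to prove existence by the Eulerian-circuit route advertised in the abstract. Write $P=\{\sigma_1,\ldots,\sigma_k\}$ for the labeled subposet of the Boolean lattice, so that an assignment is a function $f\colon P\to[n]$, and encode each such $f$ as the word $f(\sigma_1)f(\sigma_2)\cdots f(\sigma_k)$ after fixing once and for all an ordering $\sigma_1,\ldots,\sigma_k$ that refines the partial order of $P$ (a linear extension). As with the monotone functions treated elsewhere in the paper, using a linear extension rather than an arbitrary labeling is exactly what keeps the family of admissible words closed under passing to the prefixes and suffixes that the U-cycle condition manipulates.

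Next I would build the transition digraph $G=G(P,n)$: its vertices are the length-$(k-1)$ words that occur as the initial or the terminal $(k-1)$-block of some admissible word, and for each admissible $f$ we insert a directed edge, labeled by $f$, from the initial $(k-1)$-block of $f$'s word to its terminal $(k-1)$-block. An Eulerian circuit of $G$ then reads off a cyclic sequence in which every admissible assignment occurs exactly once, each overlapping the next in $k-1$ coordinates; that is precisely a U-cycle. So everything reduces to showing that $G$ is connected and balanced.

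Connectivity I would settle by a standard reachability argument: from an arbitrary vertex, repeatedly appending an admissible symbol and dropping the leading one changes essentially one coordinate at a time, and because the $\sigma_i$ are genuine subsets of a Boolean lattice the comparabilities among them impose only mild one-sided inequalities on the assigned values, leaving enough slack to route every vertex to a fixed hub (for instance the all-$1$'s word).

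The real work --- and the step I expect to be the main obstacle --- is the balance condition $\mathrm{indeg}(v)=\mathrm{outdeg}(v)$ at every vertex $v$. One cannot simply take the naive transition graph for granted: for a general poset it can fail to be balanced, since a $(k-1)$-window may admit admissible extensions on the right but none on the left. Part of the task is therefore to exploit the extra structure of the Boolean lattice to repair this --- for instance, using its self-duality together with the involution $x\mapsto n+1-x$ on $[n]$ to build a bijection on admissible words that interchanges left extensions with right extensions and thereby matches $\mathrm{indeg}$ with $\mathrm{outdeg}$ vertex by vertex, or else symmetrizing the vertex set so that the two counts are forced to agree. Once $G$ is known to be connected and balanced, Euler's theorem delivers the circuit, hence the U-cycle. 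Finally one records the promised sanity check: when $P$ is a single ground element the admissibility condition is vacuous, $G$ is the ordinary de Bruijn graph, and the construction recovers de Bruijn's theorem.
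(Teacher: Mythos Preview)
This theorem is not proved in the present paper: it is quoted from \cite{campbell} as part of the Section~1 survey of prior work (Theorem~13), so there is no in-paper argument to compare your attempt against.

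Judged on its own terms, what you have written is a plan rather than a proof, and the step you yourself flag as the crux---balance---is not actually carried out. The self-duality idea you sketch (reverse the word and apply $x\mapsto n+1-x$) produces an edge-reversing involution $\varphi$ on the transition digraph, and that only gives $\mathrm{outdeg}(v)=\mathrm{indeg}(\varphi(v))$; since $\varphi$ does not fix vertices in general, this is not the per-vertex equality $\mathrm{outdeg}(v)=\mathrm{indeg}(v)$ that Euler's theorem requires. A real balance argument has to name the admissibility condition precisely and count left- and right-extensions of a fixed $(k-1)$-window directly; your write-up never pins that condition down (you refer only to ``mild one-sided inequalities''), so neither balance nor connectivity can be checked as stated. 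You should also reconcile your reading of the statement with \cite{campbell}: under your interpretation a one-element subposet yields length-$1$ words, which does not recover de~Bruijn's theorem on length-$k$ words as the theorem claims, so the intended encoding is likely different from the one you set up.
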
 Significantly, Theorem 13 was (though this is not obvious) a statement of U-Cycles of suitably restricted words.  The agenda in this paper was continued by King, Laubmeier, and  Orans \cite{king}, who proved, in another result in which the connection to words is not obvious:
\begin{theorem}
U-Cycles exist for all naturally labeled posets on $n$-elements.
\end{theorem}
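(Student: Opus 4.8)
The plan is to follow the Eulerian-circuit paradigm standard in this area. Fix an injective encoding of the naturally labeled posets on $[n]$ as length-$k$ words over a suitable alphabet $\Sigma$, build the transition digraph $G$ whose vertices are the length-$(k-1)$ windows occurring as a prefix or a suffix of some encoded poset and whose edges are the encoded posets themselves (the edge for a poset $P$ running from its prefix to its suffix), and prove that $G$ is connected and balanced; an Eulerian circuit of $G$ then reads off the U-cycle. Because, as in Theorems 8 and 13, ``the connection to words is not obvious'', essentially all the work lies in choosing the encoding, and I would choose it with the balance computation below in view.

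Here is the balance computation the encoding should be built to support. In a naturally labeled poset the label $n$ is necessarily maximal and the label $1$ necessarily minimal, so deleting $n$, or deleting $1$ and relabeling $i\ra i-1$, carries a naturally labeled poset on $[n]$ to one on $[n-1]$. Conversely a naturally labeled poset on $[n]$ is precisely a naturally labeled poset $R$ on $[n-1]$ together with the down-set of an adjoined top element, which may be an \emph{arbitrary order ideal of $R$}; dually it is such an $R$ together with the up-set of an adjoined bottom element, which may be an \emph{arbitrary order filter of $R$}. Hence, if the encoding is arranged so that the length-$(k-1)$ windows correspond to the naturally labeled posets on $[n-1]$, with ``extend on the right'' adjoining a new maximum and ``extend on the left'' adjoining a new minimum, then the vertex for $R$ has out-degree equal to the number of order ideals of $R$ and in-degree equal to the number of order filters of $R$. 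These are equal, since complementation $I\mapsto [n-1]\setminus I$ is a bijection between the ideals and the filters of $R$; so $G$ is balanced.

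Connectivity is then straightforward. From the window for any $R$, repeatedly take the edge that adjoins an \emph{isolated} new extremal element --- the empty set is always an order ideal, so this edge exists --- and after at most $n-1$ such steps every original element has been deleted and replaced by an isolated point, reaching the window for the $(n-1)$-element antichain; symmetrically, that window reaches every other window. Thus $G$ is strongly connected, hence Eulerian, and the Eulerian circuit yields the U-cycle.

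The step I expect to be the genuine obstacle is the one taken on faith above: producing an honest word-encoding for which the length-$(k-1)$ sliding window acts as ``delete a maximal element'' on one side and ``delete a minimal element'' on the other. A short counting argument shows this is impossible with words of length exactly $n$ whose $j$-th letter depends only on the restriction of $P$ to $\{1,\ldots,j\}$, for then every letter would be forced constant --- intuitively, the comparability of the labels $1$ and $n$ has nowhere to be recorded without corrupting either the prefix or the suffix. One is therefore pushed toward longer words carrying redundant bookkeeping, in the spirit of the ${k\choose2}$-letter encoding behind Theorem 8 and of the non-obvious encoding behind Theorem 13; indeed the cleanest route may be to realize naturally labeled posets as a special case of the ``assignment to a labeled subposet of the Boolean lattice'' of Theorem 13. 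Once a correct encoding is fixed, the balance and connectivity arguments above should go through essentially unchanged, the moral being that each left-extension of a window is paired with a right-extension by an ideal/filter complementation.
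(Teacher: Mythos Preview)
The paper does not prove Theorem~14; it is quoted in the introductory survey as a result of King, Laubmeier, and Orans~\cite{king}, with no argument given here. So there is no proof in this paper to compare your attempt against.

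On the merits of your sketch: the abstract Eulerian argument is correct and attractive. If the vertices are naturally labeled posets on $[n-1]$ and the edges are those on $[n]$, with an edge running from ``delete $n$'' to ``delete $1$ and shift labels down'', then out-degree at $R$ counts order ideals of $R$, in-degree counts order filters, and complementation $I\mapsto[n-1]\setminus I$ matches them; repeatedly adjoining an isolated point reaches the antichain, giving connectivity. All of this is sound.

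But, as you yourself flag, this is not yet a proof of the stated theorem. A U-cycle is by definition a cyclic \emph{string} in which consecutive length-$k$ windows overlap in $k-1$ characters; what you have constructed is an Eulerian circuit in an abstract transition digraph on posets, not on words. Turning that circuit into a U-cycle requires an encoding $P\mapsto w(P)\in\Sigma^k$ with the property that the length-$(k-1)$ suffix of $w(P)$ equals the length-$(k-1)$ prefix of $w(P')$ precisely when $P'$ follows $P$ along an edge of your digraph --- and you have not produced one. Worse, you have given a correct argument that the most natural candidate (length $n$, with the $j$-th symbol determined by $P\!\upharpoonright\!\{1,\dots,j\}$) cannot work. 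So the proposal names the obstacle accurately but does not overcome it; the encoding \emph{is} the content of the result, and until it is supplied you have proved only the weaker statement that the poset-transition digraph is Eulerian.
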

\noindent In \cite{cdg}, it was shown that U-Cycles exist for the partitions of an $n$-element set into an arbitrary number of parts if $n\ge5$.  An example of such a U-Cycle, for $n=5$ is the following: $$DDDDDCHHHCCDDCCCHCHCSHHSDSSDSSHSDDCH$$
$$SSCHSHDHSCHSJCDC,$$ where, e.g. the 5-letter word $HCCDD$ represents the partition $1\vert23\vert45$.  Not much was known, however, about partitions of an $n$-element set into a fixed number of parts.  Improving and extending key work of ETSU undergraduates Elks and McInturff 
who focused on the case $k=2,3$, REU students Higgins, Kelley and Sieben \cite{higgins} proved:
\begin{theorem}  There exists an Eulerian cycle of all partitions of $[n]$ into $k$ parts; $2\le k<n$.
\end{theorem}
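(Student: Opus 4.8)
The plan is to recast the statement as the existence of an Euler circuit in a transition digraph and then verify the classical ``connected ${}+{}$ balanced'' criterion recalled in the introduction. Following the convention illustrated by $HCCDD\leftrightarrow 1|23|45$, I would identify a partition of $[n]$ into $k$ parts with its associated \emph{pattern}: a word $w_1\cdots w_n$ over an arbitrary alphabet is read as the partition in which $i,j$ lie in a common part iff $w_i=w_j$, so that two words encode the same partition exactly when one is a relabeling of the other, and ``$k$ parts'' means exactly $k$ distinct letters occur. Let $G$ be the digraph whose vertices are the patterns on $n-1$ symbols with $k-1$ or $k$ parts, and which carries, for every pattern $P$ on $n$ symbols with exactly $k$ parts, a directed edge from the pattern $P$ induces on coordinates $\{1,\ldots,n-1\}$ to the one it induces on $\{2,\ldots,n\}$. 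An Euler circuit of $G$ is precisely a cyclic listing of the $k$-part partitions of the required kind, so by the criterion in the introduction it remains to show $G$ is connected and balanced; the hypothesis $k<n$ is exactly what guarantees $G$ is nonempty and that both kinds of vertex actually occur.

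\textbf{Balance.} For a vertex $u$ with $j\in\{k-1,k\}$ parts, the out-edges at $u$ are the ways to append one new coordinate so that the resulting $n$-symbol pattern again has $k$ parts: either repeat one of the $j$ parts already present (possible only if $j=k$, in $k$ ways) or open a brand-new part (possible only if $j=k-1$, in one way). Dually, the in-edges at $u$ are the ways to prepend a new initial coordinate to an $n$-symbol $k$-part pattern whose last $n-1$ coordinates spell $u$, and exactly the same enumeration applies. Hence $\deg^{+}(u)=\deg^{-}(u)$ at every vertex, so $G$ is balanced; the common value is $k$ on the ``$k$-part'' vertices and $1$ on the ``$(k-1)$-part'' vertices.

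\textbf{Connectivity — the crux.} A balanced digraph is strongly connected as soon as it is weakly connected, so it suffices to join every vertex to a fixed hub, say the pattern $1|2|\cdots|(k-1)|\{k,\ldots,n-1\}$, by a directed walk. After $n-1$ shift-steps from a vertex $v$ its original coordinates have all scrolled off, so reaching the hub amounts to exhibiting a word $w_1\cdots w_{2n-2}$ whose first $n-1$ symbols realize $v$, whose last $n-1$ realize the hub, and each of whose $n-1$ length-$n$ windows has exactly $k$ parts; one produces such a word by transporting parts across the overlap one coordinate at a time, keeping a ``large'' part alive throughout so that no window is forced below $k-1$ or above $k$ parts. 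I expect this bookkeeping — deciding which parts of the two halves merge where while respecting consistency of the whole word and the exact-$k$-parts constraint on every window, and tracking the transient $(k-1)$-part states — to be the fussiest ingredient, the slack furnished by $k<n$ being what makes it go through, though no single step is deep.

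\textbf{Conclusion.} With $G$ connected and balanced, the classical theorem produces an Euler circuit, which is the desired Eulerian cycle of all partitions of $[n]$ into $k$ parts; should one further want this written as a U-cycle over an explicit alphabet, the circuit can be instantiated by assigning letters as one traverses it (allowing, if necessary, a few symbols beyond $k$, in the spirit of the augmented ground sets used for permutations).
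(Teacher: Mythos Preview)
The paper does not prove this theorem; it is quoted in the introductory survey as a result of Higgins, Kelley, and Sieben \cite{higgins}, with no argument supplied here. So there is no in-paper proof to compare your proposal against.

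On the proposal itself: your transition digraph and the balance computation are correct and are exactly the standard machinery the paper deploys for its own results. The connectivity step, however, is only a sketch --- as you yourself flag --- and that is where essentially all of the content of this theorem lives. Your plan to bridge an arbitrary vertex $v$ to a fixed hub by a single word of length $2n-2$ whose every length-$n$ window has exactly $k$ parts is plausible, and your instinct that the slack from $k<n$ together with a persistent ``large'' part is what makes the bookkeeping go through is sound in spirit; but you have not actually carried out the construction, and it is not automatic (one must control simultaneously the identifications forced by letters shared between the $v$-half and the hub-half and the exact-$k$ constraint on every window). In \cite{higgins} this connectivity argument is the substance of the paper and requires real care. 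So your proposal is a correct outline of the standard strategy, with the one genuinely nontrivial lemma left unproved.
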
  They then went on to prove both positive and negative results on when these Eulerian cycles could be ``lifted" to U-Cycles.

As the reader can see, much has been done but much has yet to be accomplished.  In this paper, we 
consider further restrictions on words based upon (i) the behavior of the associated discrete functions, and (ii) additional considerations for lattice paths.  These two sets of results are presented in Sections 2 and 3 respectively.  In Section 2 we will study new results in which analytic concepts such as monotonicity, letter composition, and rates of growth determine the set of allowable words.   As mentioned, Theorem 12 will be generalized in Section 3.  Some of the results in these two sections are easier to establish than others, and this is a hallmark of the theory.

\section{New U-Cycles of Discrete Functions}  In this section, we deal with U-Cycles of words that mirror standard growth criteria from Calculus (e.g., monotonicity, Lipschitzness, etc.)

\subsection{Monotone Non-Decreasing words.}

 For the 26-letter English alphabet, words that obey the standard lexicographic order of their constituent letters are called \textit{monotone non-decreasing}, with a similar definition holding for {\it monotone non-increasing} words.  For brevity we will only consider non-decreasing monotone words and refer to them simply as \textit{monotone}.  For example, if we use the customary order $a < b < c < \ldots < z$, then the word $aaabbccgglln$ is {monotone,} while the word $aaabbggcclln$ is not.  Since we wish to incorporate cyclic arrangements, we say more generally that a word on an alphabet $a_1< a_2<\ldots< a_n$ is {monotone} if it has a cyclic rearrangement $(\alpha_1,\ldots,\alpha_k)$ with $\alpha_i\le \alpha_{i+1}$ for each $i$.  For example, $gggkklabf$ is monotone via the cyclic rearrangement $abfgggkkl$, while $gggkklabl$ is not, because $l\nleq g$.
 
\begin{theorem}
U-Cycles of $k$-letter monotone words on an ordered $n$-letter alphabet exist for
all $k$ and $n$.
\end{theorem}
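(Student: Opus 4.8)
The plan is to reduce to the Eulerian criterion recalled in the introduction, exactly as in the proof of deBruijn's Theorem. Let $G=G_{k,n}$ be the transition digraph whose vertices are the monotone $(k-1)$-letter words on $[n]$, with one edge for each monotone $k$-letter word $w=w_1w_2\cdots w_k$, directed from $w_1\cdots w_{k-1}$ to $w_2\cdots w_k$. I would first record the reformulation that a word $w\in[n]^k$ is monotone precisely when, reading its $k$ letters cyclically, at most one consecutive pair is a strict descent --- this is just the ``one cyclic break point'' restatement of the definition. It follows at once that every length-$(k-1)$ window of a monotone $k$-word is again monotone, so $G$ is well defined, and that every monotone $(k-1)$-word $v$ extends on the right to a monotone $k$-word (append $v_1$ if $v$ is non-decreasing, append $v_{k-1}$ otherwise), so $G$ has no isolated vertices and its edges biject with the objects to be cycled through. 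An Eulerian circuit of $G$ then spells out a U-cycle of monotone $k$-words, since consecutive edges overlap in $k-1$ letters and each monotone $k$-word is traversed once; so it suffices to prove that $G$ is connected and balanced.

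Balancedness is automatic, and this is the one slick point. Fix a vertex $v=v_1\cdots v_{k-1}$. Its out-edges are the monotone words $v_1\cdots v_{k-1}c$ and its in-edges are the monotone words $c\,v_1\cdots v_{k-1}$; but in both of these $k$-words the cyclic multiset of consecutive pairs is the same, namely the pairs internal to $v$ together with $(v_{k-1},c)$ and $(c,v_1)$. Hence $v_1\cdots v_{k-1}c$ is monotone if and only if $c\,v_1\cdots v_{k-1}$ is, so the out-degree and in-degree of $v$ coincide.

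It remains to show that $G$ is connected, which is the real work. I would prove that every vertex has a directed path to the all-minimum word $1^{k-1}$ (writing $1$ for the least letter $a_1$). For a non-decreasing vertex, appending the letter $1$ is legal and, iterated, pushes all the original letters off the left end after $k-1$ steps, leaving $1^{k-1}$. For a vertex $v$ carrying a descent, that descent is unique, say at position $j$ with $1\le j\le k-2$, and the condition for $v$ to be a vertex forces $v_{k-1}\le v_1$; I would then append the letter $v_{k-1}$ exactly $j$ times. A short check using $v_{k-1}\le v_1\le v_2\le\cdots\le v_j$ shows that each of these appends yields a monotone $k$-word, that the lone descent is pushed off the left end, and that the resulting vertex --- the string $v_{j+1}\cdots v_{k-1}$ followed by $j$ copies of the letter $v_{k-1}$ --- is non-decreasing, which reduces matters to the previous case. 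Thus every vertex reaches $1^{k-1}$, so $G$ is connected; being also balanced, it is Eulerian, and we are done. The trivial boundary cases $k=1$ and $n=1$ can be checked directly.

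The step I expect to be the main obstacle is this connectivity argument, and in particular the one-descent case: unlike balancedness it does not come for free, and one has to keep every intermediate $k$-word monotone throughout. The binding constraint is that appending a letter $c$ to a word that already carries its single permitted descent is legal only when the current last letter is $\le c\le$ the current first letter; the choice $c=v_{k-1}$ works precisely because the vertex condition $v_{k-1}\le v_1$ makes it available.
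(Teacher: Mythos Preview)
Your argument is correct and follows the same overall plan as the paper: build the transition digraph on monotone $(k-1)$-words, show it is balanced and weakly connected, and read off the U-cycle from an Eulerian circuit. The differences are in how the two verifications are carried out. For balancedness, the paper does an explicit case analysis on the first and last letters $a_i,a_j$ of the vertex and computes the common in/out-degree ($i+(n-j)+1$ when $a_i\le a_j$, and $i-j+1$ when $a_i>a_j$); your observation that $v_1\cdots v_{k-1}c$ and $c\,v_1\cdots v_{k-1}$ share the same cyclic multiset of adjacent pairs, hence are simultaneously monotone, is a genuinely slicker route that bypasses the case work entirely. For connectedness, both proofs route every vertex to the constant word $a_1^{\,k-1}$, but the paper splits on $a_i\le a_j$ versus $a_i>a_j$, whereas you split on whether the vertex is linearly non-decreasing or carries its single internal descent. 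Your split is the more robust one: when $a_i=a_j$ both structural types can occur, and for a vertex such as $bcab$ over $\{a<b<c\}$ (first and last letter both $b$, one internal descent at $c\!\to\!a$) the only legal suffix is $b$, so the paper's ``append $a_1$'' move in the $a_i\le a_j$ branch is not available there. Your prescription---append $v_{k-1}$ until the descent is shifted off, then append $a_1$ repeatedly---handles this boundary case without modification. In short, same architecture, but your symmetry argument for balancedness and your descent-based case split for connectivity are cleaner and close a small gap in the paper's case analysis.
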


\begin{proof}
Consider the alphabet with lexicographic order: $a_1< a_2< \ldots< a_n$. We assume that $n\ge3$, since if $n=2$ all words are monotone and deBruijn's theorem applies.   
As is customary, we create a digraph $D$ with vertices being words of length $k-1$ that can be extended to a monotone word, and with there being an edge from one vertex to another if the last $k-2$ letters of the first coincide with the first $k-2$ letters of the second.  First, we show that this digraph is balanced.  For vertices of length $\geq 2$, there will be a first letter $a_i$ and a last letter $a_j$.  There are two possibilities:  either $a_i \leq a_j$ or $a_i > a_j$.  

Suppose that $a_i \leq a_j$.   Then the possible letters that can be appended to our word as a suffix are all those letters $a_s$ such that $a_j \leq a_s \leq a_n$ or $a_1 \leq a_s \leq a_i$,  i.e., letters in the range $(a_i,a_j)$ are disallowed.  
Therefore, the number of possible letters that can be appended as a suffix, i.e., out-degree,  is $i + (n - j) + 1$.  
Next, consider adding a prefix instead of a suffix.  As before, the possible letters that can be appended to our word as a prefix are all those letters $a_p$ such that $j \leq p \leq n$ or $1 \leq p \leq i$.   
Thus the in-degree is also $i + (n - j) + 1$, and 
it follows that $D$ is balanced.  

Suppose that $a_i > a_j$.  Then the possible letters that can be appended to our word as a suffix are all those letters $a_s$ such that $a_j \leq a_s \leq a_i$, so that the out-degree is $i-j+1$  
In a similar fashion, the in-degree can also be checked to be $i-j+1$, and 
thus $D$ is balanced.

Second, we show that the $D$ is connected.  It is sufficient to show that the graph is weakly connected by exhibiting a path from any vertex to the constant monotone word $a_1\ldots a_1$. As before, either $a_i \leq a_j$ or $a_i > a_j$.  

Suppose that $a_i \leq a_j$.
In this case, we know that letters $a_s$ such that $1 \leq s \leq i$ are allowed suffixes.  So we begin with our first word and add a suffix of the letter $a_1$.  The resulting word will end in $a_1$ and begin with a letter whose index is $p \geq 1$, and so we can append another $a_1$ as a suffix.  Continuing in this fashion, we get the target word composed entirely of the letter $a_1$, since appending the letter $a_1$ is always allowable.  
Suppose that $a_i > a_j$. In this case, we can append any letter $a_s$ with $a_j \leq a_s \leq a_i$, so we add one $a_j$, and then another, and so on until we have a word of length $k-1$ composed entirely of the letter $a_j$.  Now, since the index of the first and last letters are equal, we may append any letter we like as a suffix, so we append an $a_1$, and then another, and so on until  we have a word of length $k-1$ composed entirely of $a_1$'s. 
Thus, the digraph is  weakly connected and balanced, and so it is Eulerian, ergo a U-Cycle exists, by considering the concatenation of the edge labels in the Eulerian cycle.
\end{proof}

\subsection{Small constructions.}

We next offer some small constructions for U-Cycles on monotone words.  

Consider the binary alphabet.  It is trivial to show that a U-Cycle exists for words of length 2 and 3 on the binary alphabet (all words of these lengths are monotone).  Consider words of length four on the binary alphabet.  The words 0101 and 1010 are not monotone, but the rest are.
We begin with the edge 0000 and trace the Eulerian path as follows
$$0000 \rightarrow 0001 \rightarrow 0010\rightarrow 0100\rightarrow 1001\rightarrow 0011\rightarrow 0111$$

$$\rightarrow 1111\rightarrow 1110\rightarrow 1101\rightarrow 1011\rightarrow 0110\rightarrow 1100\rightarrow 1000,$$
so the resulting U-Cycle is 00010011110110.

Consider the alphabet $\{A, B, C\}$.  It is trivial to show that a U-Cycle exists for words of length 2 on this alphabet (all words of length 2 are monotone).  Consider words of length 3. The words ACB, CBA, and BAC are not monotone.  Once again we work with edge labels and construct the U-Cycle as follows:
$$AAA\rightarrow AAB\rightarrow ABA\rightarrow BAB\rightarrow ABB\rightarrow BBB\rightarrow BBC\rightarrow BCC$$
$$\rightarrow CCB\rightarrow CBC\rightarrow BCB\rightarrow CBB\rightarrow BBA\rightarrow BAA\rightarrow AAC\rightarrow ACA$$
$$\rightarrow CAC\rightarrow ACC\rightarrow CCC\rightarrow CCA\rightarrow CAB\rightarrow ABC\rightarrow BCA\rightarrow CAA, $$
so that the resulting U-Cycle is $AABABBBCCBCBBAACACCCABCA$.

\subsection{Lipschitz Words}

A Lipschitz function $f:{\mathbb R}\rightarrow{\mathbb R}$ is one for which $|f(y) - f(x)| \le C|y-x|$ for all $x$ and $y$, where $C$ is a constant independent of $x$ and $y$.  In other words, the values of $f$ for successive integers $x$ and $x+1$ differ by at most $C$.  Now, consider analogously a $k$-letter word $(\alpha_1,\ldots,\alpha_k)$ on the \textit{ordered and cyclic alphabet} $a_1<a_2<\ldots<a_n$ in which each successive letter is within $c$ letters of the preceding one.  We call such strings \textit{Lipschitz words}.

\begin{theorem}
A U-Cycle of $k$-letter Lipschitz Words on an $n$-letter alphabet exists for all $k$ and $n$.
\end{theorem}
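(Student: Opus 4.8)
The plan is to run the same machine as in the proof of the monotone case. Form a digraph $D$ whose vertices are the length-$(k-1)$ Lipschitz words---the length-$(k-1)$ words $(\alpha_1,\dots,\alpha_{k-1})$ in which each letter is within $c$ of its successor on the cyclic alphabet---and put an edge from $u$ to $w$ when the last $k-2$ letters of $u$ equal the first $k-2$ letters of $w$ and the resulting length-$k$ word is still Lipschitz. Every length-$k$ Lipschitz word is then exactly one edge of $D$, so once we show $D$ is balanced and connected it is Eulerian, and reading off the edge labels along an Eulerian circuit yields the desired U-Cycle.

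Before checking balance I would reduce to the essential case. If $2c+1\ge n$ then any two letters of the alphabet are within $c$ of each other, every $k$-letter word is Lipschitz, and deBruijn's theorem already applies; and if $c=0$ then (for $n\ge 2$) the Lipschitz words are exactly the constant words and $D$ splits into $n$ disjoint loops, so no U-Cycle exists---hence we may assume $1\le c$ and $2c+1<n$. In that range the balance check is short, which is one of the reasons this is among the easier results here: appending a letter $a_s$ to a vertex whose last letter is $a_j$ leaves the word Lipschitz precisely when $a_s$ is within cyclic distance $c$ of $a_j$, and since $2c+1<n$ the ``ball of radius $c$'' about $a_j$ contains exactly $2c+1$ letters, so every vertex has out-degree $2c+1$; symmetrically, prepending a letter to a vertex whose first letter is $a_i$ is legal for exactly the $2c+1$ letters within cyclic distance $c$ of $a_i$, so every vertex has in-degree $2c+1$. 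Thus $D$ is $(2c+1)$-regular, in particular balanced.

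For connectivity I would, exactly as in the monotone argument, exhibit a walk from an arbitrary vertex to the constant word $a_1\cdots a_1$, which is itself a legitimate vertex. If the current vertex ends in $a_j$ with $j>1$, then since $c\ge 1$ we may append $a_{j-1}$ (consecutive indices differ by $1\le c$), and the new vertex is again Lipschitz because a contiguous subword of a Lipschitz word is Lipschitz; iterating brings us to a vertex ending in $a_1$ after $j-1$ steps, and then we append $a_1$ (distance $0$, always allowable) until the whole word is $a_1\cdots a_1$. So every vertex reaches this fixed vertex, $D$ is weakly connected, and a balanced, connected digraph is Eulerian---producing a U-Cycle by concatenating edge labels.

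The only step needing genuine care is the degree count, specifically guaranteeing that the wrap-around of the cyclic alphabet does not make the radius-$c$ neighbourhood of a letter overlap itself; this is precisely why one first peels off the case $2c+1\ge n$. Everything after that parallels the monotone proof almost verbatim.
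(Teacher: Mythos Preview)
Your proof is correct and follows the same Eulerian-circuit strategy as the paper: build the digraph on length-$(k-1)$ Lipschitz words, observe that every vertex has in- and out-degree $2c+1$, and establish connectivity by walking toward a fixed target vertex. Your version is in fact more careful than the paper's---you explicitly peel off the cases $2c+1\ge n$ (where deBruijn's theorem applies directly) and $c=0$ (where the digraph disconnects into $n$ loops, so the result actually fails for $n\ge 2$), whereas the paper simply asserts the degree is $2c+1$ without addressing wrap-around and tacitly assumes $c\ge 1$.
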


\begin{proof}
First, we show that the resulting digraph is balanced.  Consider a Lipschitz word of length $k-1$.  We construct digraph edges by appending allowable suffixes and prefixes to the word.  By definition, the value of each allowable suffix may vary by at most $c$ from the last letter in our word.  Since we can move $c$ letters in either direction, and since we can also repeat the same letter, the number of possible suffixes for a vertex, and therefore the out-degree on our digraph, is $2c+1$.  Similarly, allowed prefixes are under the same restriction, so the number of possible prefixes for a Lipschitz word, and therefore the in-degree on our digraph, is also $2c+1$, proving balancedness.

To show connectedness, consider a Lipschitz word of length $k-1$ which begins with the letter $a_i$ and ends with the letter $a_j$.  
Note that we can move from this word to another word that begins with $a_{j \pm r}$ such that $r \leq c$ by simply appending $a_{j \pm r}, r \leq c$ as a suffix.  Assume we want to arrive at a target word whose first letter is $a_{j \pm m}$, such that  $m >c$.  Since $m >c$, we can write $m$ as a sum: $c + \ldots+c+ d = m$, where $d \leq c$.  So, to construct the required word, we first add a letter $a_{j \pm c}$ in the appropriate `direction' (addition for increasing value, subtraction for decreasing), continuing until the difference between the last letter of penultimate word and the first letter of the target word is $d \leq c$, which we can reach in a single step. We then construct the rest of the word by repeating the above process.  

Since the digraph is connected and balanced, it is Eulerian, and thus a U-Cycle exists as claimed.
\end{proof}

\subsection {Cyclically Appearing Word Categories}

Several of the papers mentioned in Section 1 contain results on U-cycles for words which must contain at least one letter from a set of categories.  In this subsection, we ask that these letters alternate, thus providing U-Cycles for an analog of functions whose graphs follow a  ``zig-zag" pattern.  In \cite{champlin}, Champlin and Tomlinson had proved \begin{theorem}There exists a U-Cycle of  alternating $k$-letter words on
a $n=n_v+n_c$-letter alphabet that consists of $n_v$ ``vowels" and $n_c$ ``consonants"
if either $k$ is even, or if $k$ is odd and $n_v = n_c$. \end{theorem}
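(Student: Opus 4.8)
The plan is to run the standard transition-digraph (de Bruijn--type) argument, exactly as in the proofs of the monotone and Lipschitz theorems above. Write $V$ for the $n_v$-element class of vowels and $C$ for the $n_c$-element class of consonants, and call a word \emph{alternating} if consecutive letters lie in opposite classes. A U-Cycle of the set $\mathcal{C}$ of alternating $k$-letter words exists precisely when the digraph $D$ whose vertices are the alternating $(k-1)$-letter words and whose edges are the alternating $k$-letter words (with $w_1\cdots w_k$ directed from $w_1\cdots w_{k-1}$ to $w_2\cdots w_k$) admits an Eulerian circuit, i.e. when $D$ is balanced and weakly connected; here $D$ is well defined because every contiguous subword of an alternating word is again alternating, and because (assuming $n_v,n_c\ge 1$) every alternating $(k-1)$-word does extend to an alternating $k$-word. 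We may also assume $k\ge 2$, the case $k=1$ being degenerate.

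The first step, balancedness, is what I expect to carry the whole theorem and to be the source of the hypothesis. Fix a vertex $v=v_1\cdots v_{k-1}$. A letter $a$ may be appended as a suffix iff $a$ lies in the class opposite to that of $v_{k-1}$, so the out-degree of $v$ equals $n_c$ if $v_{k-1}\in V$ and $n_v$ if $v_{k-1}\in C$; dually, $a$ may be prepended iff $a$ lies in the class opposite to that of $v_1$, so the in-degree of $v$ equals $n_c$ if $v_1\in V$ and $n_v$ if $v_1\in C$. Hence $v$ is balanced iff $v_1$ and $v_{k-1}$ lie in the same class, and since $v$ is alternating this happens precisely when $k-1$ is odd, that is, when $k$ is even. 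If instead $k$ is odd, then at every vertex $\{v_1,v_{k-1}\}$ is split across $V$ and $C$, so $D$ is balanced iff $n_v=n_c$. This reproduces exactly the stated condition (and, since a U-Cycle is equivalent to an Eulerian circuit in $D$, also shows no U-Cycle exists for odd $k$ with $n_v\ne n_c$).

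The second step, connectedness, is handled as in the monotone-words proof by driving every vertex to a fixed target. Fix $a_0\in V$ and $b_0\in C$. Given a vertex, repeatedly append $b_0$ when the current last letter is a vowel and $a_0$ when it is a consonant; after $k-1$ such appends the window contains only $a_0$'s and $b_0$'s and so equals one of the two canonical alternating words $a_0b_0a_0b_0\cdots$ or $b_0a_0b_0a_0\cdots$, and one further append joins these two to each other. The one thing genuinely needing a check here is that the ``phases'' — words beginning in $V$ versus words beginning in $C$ — do not form separate components; but appending a single letter shifts the window and flips the class of its first letter, so one edge already crosses between the phases. Thus $D$ is weakly connected, hence Eulerian under the hypothesis, and concatenating the edge labels along an Eulerian circuit produces the desired U-Cycle.

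As a consistency check, $|\mathcal{C}|=2n_v^{t}n_c^{t}$ when $k=2t$ is even and $|\mathcal{C}|=2m^{2t+1}$ when $k=2t+1$ is odd with $n_v=n_c=m$; in both admissible cases $|\mathcal{C}|$ is even, as it must be for the ambient cyclic word to alternate globally. In short, the substantive content is the degree count of the second paragraph; the rest is routine, which — as the authors remark — is typical of this family of results.
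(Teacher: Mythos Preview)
Your proof is correct, but there is nothing in the paper to compare it against: Theorem~18 is quoted from \cite{champlin} (Champlin and Tomlinson) as background and is not proved in this paper. Your argument is nonetheless exactly the transition-digraph template the paper uses for all of its own results, and in particular your balancedness computation---that the first and last letters of a vertex word must lie in the same class, forcing $k-1$ odd or $n_v=n_c$---is precisely the mechanism isolated in the paper's proof of the $c$-category generalization (Theorem~19), which for $c=2$ recovers only the even-$k$ half of Theorem~18; your proof also covers the odd-$k$, $n_v=n_c$ case, which Theorem~19 as stated does not.
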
  

\noindent If we think of vowels and consonants as categories, we may also consider words constructed of letters separated into $c$ ordered disjoint categories, such that each subsequent letter ``cycles" to the next category.  That is, if the first letter belongs to $C_1$, the next letter belongs to $C_2$, and so on to $C_c$, and then the order cycles again.

\begin{theorem}
A U-Cycle of $k$-letter words on an $n$-letter alphabet that cycles through each of $c$ different disjoint categories of letters $C_1,C_2,\ldots,C_c$ exists if  $k = ac+2$ for $a \in \mathbb{N}$.
\end{theorem}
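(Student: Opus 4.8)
The plan is to run the familiar Eulerian-digraph argument used throughout this section. Form the transfer digraph $D$ whose vertices are the cycling words of length $k-1$ (those that extend to a cycling word of length $k$), with an edge from $u$ to $v$ exactly when the last $k-2$ letters of $u$ agree with the first $k-2$ letters of $v$; each edge then carries a cycling word of length $k$, and reading off one new letter per edge along an Eulerian circuit of $D$ produces a U-Cycle of the cycling words of length $k$. Every vertex $w=w_1\cdots w_{k-1}$ has a well-defined \emph{phase}: the index $t\in\{1,\ldots,c\}$ of the category $C_t$ containing $w_1$. Because the categories are disjoint, the phase forces $w_p\in C_{t+p-1}$ (indices mod $c$) for every $p$. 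It therefore suffices to show that $D$ is balanced and connected, whereupon it is Eulerian.

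For balancedness, the role of the hypothesis $k=ac+2$ is exactly this: a vertex has length $k-1=ac+1$, so moving from its first to its last letter advances the category index by $k-2=ac\equiv 0\pmod c$; hence $w_1$ and $w_{k-1}$ lie in the same category $C_t$. Consequently the letters that may legally be appended as a suffix are precisely those of the next category $C_{t+1}$, while the letters that may legally be prepended are precisely those of the previous category $C_{t-1}$; so the out-degree of $w$ is $|C_{t+1}|$ and its in-degree is $|C_{t-1}|$. These agree whenever the categories are equinumerous; they also agree, for any sizes, when $c=2$ (then $C_{t+1}=C_{t-1}$), which is the ``$k$ even'' case of the alternating-words theorem of Champlin and Tomlinson. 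In either situation $D$ is balanced.

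For connectedness it is enough, given balancedness, to produce from an arbitrary vertex a directed walk to one fixed cycling word, say $\beta=\beta_1\cdots\beta_{k-1}$ where $\beta_p$ is the least letter of $C_{((p-1)\bmod c)+1}$. From a vertex of phase $t$ one may append any letter of the category following that of the current last letter; each such append advances the phase by $1$, and after any $M\ge k-1$ appends the word consists entirely of letters we chose freely inside the forced, consecutively cycling categories, so the walk can be steered to end at any prescribed cycling word of phase $t+M\pmod c$. Picking $M\ge k-1$ with $M\equiv 1-t\pmod c$ lands us at $\beta$; hence the underlying graph of $D$ is connected. Being connected and balanced, $D$ is Eulerian, and concatenating the new edge-letters around an Eulerian circuit gives the claimed U-Cycle.

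The step requiring the most care is the degree count. One must check that the phase is genuinely well defined and that disjointness of the categories is what makes each position's category forced; but the crux is that it is the congruence $k\equiv 2\pmod c$ — not merely $k$ being large — that puts the first and last letters of a vertex in a common category and thereby pins the out- and in-degrees to $|C_{t+1}|$ and $|C_{t-1}|$. By contrast the connectivity argument is the routine ``append until the original letters are flushed out'' device and in fact goes through for every $k\ge 2$.
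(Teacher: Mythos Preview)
Your argument follows the paper's route exactly: build the transfer digraph on cycling $(k-1)$-words, check balance, check connectedness, conclude Eulerian. The paper likewise observes that the vertex length $k-1=ac+1$ forces the first and last letters of a vertex into the same category and from this alone asserts $i(v)=o(v)$. You have been more scrupulous here: with first and last letter in $C_t$, the out-degree is $|C_{t+1}|$ and the in-degree is $|C_{t-1}|$, and these coincide only under your added hypothesis of equinumerous categories (or $c=2$, where $C_{t+1}=C_{t-1}$). That hypothesis does not appear in the paper's statement, and the paper's proof does not address it; without it the balancedness claim fails in general, so your restriction is a genuine correction rather than a weakening of the result. Your connectedness argument --- append freely until the original letters are flushed, choosing $M\ge k-1$ with $M\equiv 1-t\pmod c$ to land in the phase of the target word --- is the same ``cycle and replace'' idea the paper uses, stated more explicitly.
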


\begin{proof}
  As always, if edge words are of length $ac+2$, then vertex words will be of length $ac+1$.  

Consider whether an edge word of length $c+1$ is possible (in this case, we think of $a$ as being 1.)  It will be associated with vertex words of length $c$.  Without loss of generality, such a word will be of the form $l_{1,C_1}l_{2,C_2}...l_{c,C_c}$, where $l_{j,C_j} \in C_j$.   Since $|C_1|$ is not necessarily equal to $|C_c|$, we cannot claim a balanced digraph for words of length $c$.  It follows that words of length $ac$, simply some multiple of $c$, will also not necessarily be balanced.

Now, consider an edge word of length $c+2$, associated with a vertex words of length $c+1$.  Without loss of generality, such a word will be of the form $l_{1,C_1}l_{2,C_2}...l_{c,C_c}l_{c+1,C_{1}}$.  In particular, since $l_{1,C_1}, l_{c+1,C_{1}} \in C_1$, $i(v)= o(v)$, and thus the graph is balanced.  It follows that the same will hold for any multiple of $c$, hence the graph is balanced for words of length $ac+2$.

If two letters $l_i, l_j \in C_{\ell}$, we say that they have a {\em shared condition}.  Similarly, $l_i \in C_{m}, l_j \in C_{\ell}$ will be said to have {\em different conditions}.

In order to construct a new word on our digraph from an existing one, we have shown that the first and last letters of our vertex words must have a shared condition, and the new vertex word we are constructing must also begin and end with letters of shared condition.  Without loss of generality, consider the first letter of a vertex word.  There are two possibilities: either the first letter of our target word has a shared condition with the first letter of our starting word, or it does not.  

If it does not, then the first letter of the target word can be appended to the end of our vertex word as normal after cycling the word around as needed, and the target vertex word constructed one letter at a time.

If the first letter of our new vertex word does have a shared condition with the first letter of our current word, simply append placeholder letters with different condition until the correct first letter can be added. From here we construct the new vertex word as before.  Thus, the digraph is connected.
\end{proof}
\subsection{Application:  Random Walks on the Honeycomb Lattice}

One might wonder in what ``practical" situation one has equal numbers of vowels and consonants that must alternate as in Theorem 18.  In this subsection, we provide a concrete illustration, namely walks on the honeycomb lattice.  The notion of ``lattice paths," studied in Section  3, is difficult to pin down on the honeycomb lattice  due to the lack of a well-defined Cartesian coordinate system, but it is interesting to consider the set of random walks on the honeycomb lattice pictured in Figure 1.

\begin{figure}[h!]
\centering
\includegraphics[height=2.5in]{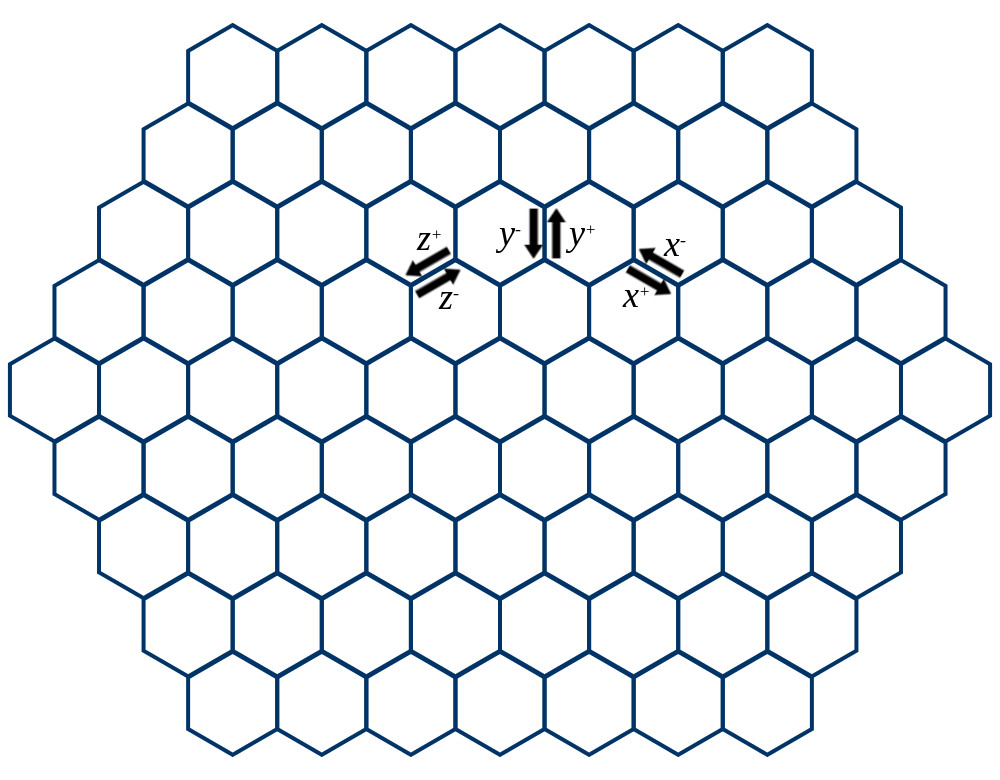}
\caption{Honeycomb Lattice.}
\label{fig:fig5}
\end{figure}

It is useful to define the directions a walk can take.  A honeycomb lattice is similar to a three-dimensional space, but each ``step" taken on the honeycomb limits the directions in which one can ``step" next.  We define the three directions in terms of the three ``axes" of the lattice: $x,y,$ and $z.$  Then, each of these is coupled with a direction to fill the alphabet we will use to compose honeycomb lattice walks.  Thus, our alphabet is defined as $\{x^+, x^-, y^+, y^-, z^+, z^-\}$.
Since we are not using cartesian coordinates, it is not necessary to define an ``origin."  The origin will simply be the arbitrary vertex where the walk begins.
Since each ``step" in the walk must be followed by a specific array of next ``steps,"  we list the possible next steps in Table 1.  In particular, note that the signs must alternate along our walk, and so the analogy with alternating vowels and consonants is complete when we consider the three letters $x^+, y^+$, and $z^+$ to be the vowels and the others to be the consonants.

\begin{table}
\centering
\begin{tabular}{c|c}
    Last & Next \\
    $x^+$ & $\{x^-, y^-, z^-\}$\\
    $x^-$ & $\{x^+, y^+, z^+\}$\\
    $y^+$ & $\{x^-, y^-, z^-\}$\\
    $y^-$ & $\{x^+, y^+, z^+\}$\\
    $z^+$ & $\{x^-, y^-, z^-\}$\\
    $z^-$ & $\{x^+, y^+, z^+\}$\\
\end{tabular}
\caption {Permitted steps.}
\end{table}

\begin{theorem}
A universal cycle of random walks on the honeycomb lattice of length $n$ exists for all $n$.
\end{theorem}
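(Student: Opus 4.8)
The plan is to reduce this to an application of the standard balanced-and-connected-implies-Eulerian machinery, exactly as in the proof of Theorem~18, by setting up the appropriate transition digraph $D$. The vertices of $D$ are the honeycomb words of length $n-1$ (i.e., words over the alphabet $\{x^+,x^-,y^+,y^-,z^+,z^-\}$ whose signs alternate and whose consecutive letters obey Table~1), and there is an edge from $u$ to $v$ when the last $n-2$ letters of $u$ agree with the first $n-2$ letters of $v$; the edge then carries a legal word of length $n$. A U-Cycle of the desired random walks is precisely an Eulerian circuit of $D$, so it suffices to show $D$ is connected and balanced.

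First I would treat the small cases $n=1,2$ directly (for $n=1$ the six one-letter words trivially cycle; for $n=2$ one writes out a short explicit cycle realizing all six ordered pairs $x^+x^-$, $x^+y^-$, etc.), then assume $n\ge 3$. For balancedness, fix a vertex $w = \alpha_1\alpha_2\ldots\alpha_{n-1}$. Because the signs strictly alternate, $\alpha_1$ and $\alpha_{n-1}$ have the same sign iff $n-1$ is odd, i.e. iff $n$ is even. When $n$ is even, the first and last letters carry the same sign, so the set of legal suffixes (those letters whose sign is opposite to $\alpha_{n-1}$'s, which is also opposite to $\alpha_1$'s) has size $3$, and the set of legal prefixes (letters whose sign is opposite to $\alpha_1$'s) likewise has size $3$; hence $\mathrm{indeg}(w)=\mathrm{outdeg}(w)=3$. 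When $n$ is odd the first and last letters have opposite signs, but by Table~1 the legal next letters from $\alpha_{n-1}$ and the legal previous letters before $\alpha_1$ are each of size $3$ as well, so again $\mathrm{indeg}(w)=\mathrm{outdeg}(w)=3$. Either way $D$ is balanced. (This is just Theorem~18's vowel/consonant count with $n_v=n_c=3$, $c=2$, specialized to the honeycomb transition table.)

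For connectedness I would fix a distinguished vertex, say the alternating word $x^+x^-x^+x^-\ldots$ of length $n-1$, and show every vertex $w$ can be driven to it by successively appending suffixes: from any $w$ ending in a letter of sign $\sigma$, we may append $x^{-\sigma}$, then from that word we may append $x^{\sigma}$, and so on — after $n-1$ such appends the window has been flushed and we are sitting at the alternating $x^\pm$ word. Since every vertex reaches a common vertex, $D$ is (weakly) connected; being also balanced, it is Eulerian, and concatenating the edge labels of an Eulerian circuit yields the claimed U-Cycle. The only mild subtlety — and the place one must be slightly careful — is the parity bookkeeping in the balancedness step, i.e. correctly tracking whether the first and last letters of a length-$(n-1)$ vertex share a sign, and checking that $|C_i|=3$ for the "sign-$+$" and "sign-$-$" classes makes the in- and out-degrees coincide regardless; this is exactly the hypothesis $n_v=n_c$ in Theorem~18 and is what makes the honeycomb case work for \emph{all} $n$ rather than only even $n$.
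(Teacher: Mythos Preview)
Your proposal is correct and takes essentially the same approach as the paper: the paper's entire proof is the one-line observation that this is an application of Theorem~18 with $n_v=n_c=3$, and you have simply unpacked that application by writing out the balanced-plus-connected argument explicitly for the honeycomb alphabet. The only cosmetic wrinkle is that your flushing procedure lands at one of two alternating $x^{\pm}$ words depending on the sign of the last letter of $w$, not necessarily the distinguished one you fixed; but these two vertices are adjacent by a single further append, so the connectedness conclusion stands.
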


\begin{proof}

A simple application of Theorem 18; it would be interesting to see which other more complicated graphs, or lattices in $d$-dimensions, lend themselves to an analysis that would culminate in the conclusion that random walks on that graph/lattice are U-cyclable via Theorems 18 or 19.  \end{proof}

\subsection{Augmented Onto Words}

Generalizing the notion of onto words, consider a word in which every letter must appear at least once, but no letter may appear more than twice.  More generally, a $k$-letter word on an $n$-letter alphabet in which each letter must appear at least $a$ times and at most $b$ times (for $a,b\geq 1, a< b$) is called an $a-b$ \textit{augmented onto} word. In terms of functions we are looking at those for which $a\le \vert f^{-1}(\{j\})\vert\le b$ for each $j=1,2,\ldots,n$.  Note that for $b=a+1$, this is the same notion as that of ``equitable words" from Theorem 4, and, in fact, we focus on the case of $a=1, b=2$ in the next result.  The proof we offer, however, allows for generalization to the case of arbitrary $a,b$, and is quite different from the one in \cite{a}.

\begin{theorem}
A U-Cycle of 1-2 augmented words exists for all $n, k$ such that $n+1 \leq k \leq 2n-1$.  
\end{theorem}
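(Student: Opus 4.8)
The plan is to build the usual transition digraph $D$ whose vertices are the words of length $k-1$ that occur as a window of some 1-2 augmented onto word, and whose edges are the 1-2 augmented onto words of length $k$ themselves, the edge $a_0a_1\cdots a_{k-1}$ running from $a_0\cdots a_{k-2}$ to $a_1\cdots a_{k-1}$. As usual it suffices to show that $D$ is balanced and (weakly) connected; then $D$ is Eulerian, and concatenating the edge labels of an Eulerian circuit produces the U-Cycle.

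\textbf{Balancedness.} First observe that if a length-$(k-1)$ word $w$ is a window of a valid word, then at most one letter of $[n]$ is absent from $w$, and — since $k-1\le 2n-2<2n$ — not every letter can occur twice in $w$. I would then read off degrees directly. If exactly one letter $z$ is absent from $w$, then $z$ is the only letter that can legally be appended on the right and also the only one that can legally be prepended on the left, so both the out-degree and the in-degree of $w$ equal $1$. If no letter is absent, a letter $x$ may be appended (resp. prepended) precisely when $x$ occurs exactly once in $w$; hence both degrees equal the number of letters occurring exactly once in $w$, which is positive by the remark above. In either case the in- and out-degrees agree, so $D$ is balanced. (The same computation works verbatim for general $a,b$ and explains why one wants $n<k<2n$.)

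\textbf{Connectivity.} This is where the real work lies. I would isolate two structural facts. First, if two valid words share the same length-$(k-1)$ suffix, then, being two edges into a common vertex, they lie in the same component of $D$; likewise for a common prefix. Second, every cyclic rotation of a valid word $v$ lies in $v$'s component, since $v$ and its rotation $\rho(v)$ are consecutive edges of $D$ (they share a vertex, obtained by dropping the first letter of $v$). Now every valid word has exactly $d:=k-n$ doubled letters with $1\le d\le n-1$ (again exactly the hypothesis), so there is always at least one doubled and at least one single letter. Combining the two facts gives a basic move: rotate a valid word so that an occurrence of a doubled letter $a$ sits in front, then replace that leading $a$ by any currently-single letter $b$; the result is again a valid word with the same suffix, hence in the same component. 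Composing two such moves — turn the front $a$ into $b$, then re-introduce $a$ where the single $b$ sat — realizes the transposition of the two positions that held $a$ and $b$; and by using a fixed doubled letter as a pivot (three transpositions) one can transpose any two positions holding distinct letters. Hence from a given valid word one reaches every rearrangement of its multiset, while the single-letter move also changes which $d$-subset of $[n]$ is doubled, one element at a time. Since the Johnson graph $J(n,d)$ is connected for $1\le d\le n-1$, every valid word is connected to every other, so $D$ is connected.

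I expect the main obstacle to be the connectivity bookkeeping rather than any one deep idea: one must verify that each elementary move really is an edge of $D$ (i.e. that every intermediate word is genuinely 1-2 augmented onto), handle the sub-case in which the two positions to be swapped both carry single letters (routing through a doubled pivot via three transpositions), and check the degenerate cases — small $n$, and $k=2n-1$, where the vertices missing a letter have in- and out-degree $1$. It is reassuring that the whole argument uses only $1\le d=k-n\le n-1$, i.e. $n+1\le k\le 2n-1$, and the same scheme should extend with only cosmetic changes to arbitrary $a$-$b$ augmented onto words whenever the analogous ``there is always a deficient letter and always a non-full letter'' condition holds.
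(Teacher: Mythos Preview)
Your argument is correct and, for the connectivity half, genuinely different from the paper's. Your balancedness computation is essentially identical to theirs: the two vertex types (``one letter absent'' versus ``onto'') have in- and out-degree $1$ and $r$ respectively, where $r$ is the number of singleton letters. For connectivity, however, the paper does not use your rotation-plus-substitution machinery; instead it fixes the target vertex $a_1a_1a_2a_2\cdots a_{n-1}a_{n-1}$ (for the hardest case $k=2n-1$) and gives an explicit greedy walk that builds the target block by block, repeatedly cycling the word and inserting the next required letter as soon as its multiplicity allows, with a long worked example for $n=5$. Your approach---observe that rotations of an edge stay in the same component, then replace a front doubled letter by any current singleton to get a second edge with the same head vertex---is cleaner and more structural: it reduces connectivity to (i) reaching every rearrangement of a fixed multiset via transpositions, and (ii) walking the Johnson graph $J(n,k-n)$ on the set of doubled letters. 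This makes the role of the hypothesis $1\le k-n\le n-1$ completely transparent and, as you note, ports directly to the $a$--$b$ case. One small imprecision worth fixing when you write it out: for transposing two positions that both carry \emph{doubled} letters you need a \emph{single} letter as the pivot (three (doubled,\,single) swaps), symmetrically to the single--single case you mention; the phrase ``a fixed doubled letter as a pivot'' only covers the single--single situation.
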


\begin{proof}
Since our word is onto, we know from \cite{ab} that we must have $k \geq n+1$ for a U-Cycle to exist.  Furthermore, if $k=2n$, the situation would correspond to ``doubly onto" words (that is, those in which every letter appears exactly twice), and thus will not give a U-Cycle.  This means that the condition $k+1 \leq n \leq 2k-1$ is necessary for a U-Cycle of 1-2 augmented words to exist.





As always, the vertices of the digraph will be labeled with $k-1$ letter words.  There are two possibilities for the vertices: each of these $k-1$ letter words will be either onto, or nearly onto; that is, each word will contain every letter in the alphabet, or it will be missing exactly one letter.  In either case, vertices are words in which each letter occurs at most twice.  If a vertex $v$ is missing exactly one letter of the alphabet, it will have $i(v) = o(v) = 1$, because each incident edge must add the missing letter so that our $k$-letter edge word is onto.   If a vertex $v$ is onto, it will have a number of singleton letters $r$, and a number of paired letters $2(n-r)$ (that is to say $n-r$ pairs).  When a letter is added to generate a $k$-letter word,   notice we cannot add any of the letters already paired and still get a ``legal" word on our edge; we must add a singleton, which will create a new pair.  Thus our edge words will all have $r-1$ singletons and $n-r+1$ pairs.  Since the number of allowed edges corresponds to the number of singleton letters in our vertex word, $i(v) = o(v) = r$ when the word is onto.  Therefore the graph is balanced.  Note that we must have $r+2(n-r)=2n-r=k-1$, which implies that we must have  $r=2n-k+1$ to begin with.

Notice that when our edge words are at maximum length, $k=2n-1$, this will create vertex words with the fewest possible singleton letters with which to construct new words.  We claim that this is the most limiting case, so if we can show connectedness in this case, it will also be shown in less limited cases.
Let $k=2n-1$.  The vertices in such a digraph will contain words of length $k-1 = 2n-2$.  Notice that the vertex words will always have an even number of letters.  Thus when such a word is nearly onto, it will be missing one letter, and contain two of every other letter.  Also, if such a word is onto, it will contain exactly two singletons, and the remaining letters will be in pairs.  Therefore, for any $k-1$ letter word, $i(v) = o(v) \in \{1, 2\}$.

Notice that in our digraph, each subsequent edge ``adds" a letter to the end of our word, and ``drops" a letter from the beginning.  For this reason, we can discuss a means of choosing edges by which we can build our target word in terms of merely ``adding" a letter and ``dropping" a letter, with the knowledge that this corresponds to taking steps to traverse our digraph.  To show weak connectedness, we connect our starting vertex word to a fixed target vertex word, namely $A_1 A_2 ... A_{k-2} A_{k-1}=a_1a_1\ldots a_{n-1}a_{n-1},$ where the alphabet is $\{a_1,\ldots,a_{n}\}$.  We will construct our target one letter at a time, by manipulating the existing word until we can add the $A_i^{th}$ letter in its proper order using the following process.  Start adding letters to the existing word until there are no letters $a_1$ and then add two $a_1$s in succession.  Then cause the letter composition to have at most one $a_2$, and maintain this situation until the block $a_1a_1$ is once again at the end of the word (we may have to start rebuilding the target word anew for this to happen), and then add $a_2$.  Drop the other $a_2$ as soon as possible and maintain the word with just one $a_2$ until the block $a_1a_1a_2$ reappears at the end of the word, possibly by starting from scratch, when we can add the second $a_2$ causing the new ``successful block" to be $a_1a_1a_2a_2$.  Then add one and then another $a_3$ at the end of the word as before, and continue until we reach the target word.  This strategy works since we are always guaranteed two singletons when there is at least one singleton.  This process is illustrated for $n=5$ with starting word $bdabdece$ and target word $aabbccdd$.  We proceed as follows:
\[bdabdece\rightarrow dabdecec\rightarrow abdececb\rightarrow bdececb{a}\rightarrow dececb{aa}\rightarrow ececb{aab}\rightarrow\]
\[cecb{aab}d\rightarrow ecb{aab}dd\rightarrow cb{aab}dde\rightarrow b{aab}ddee\rightarrow { aab}ddeec\rightarrow {\bf abddeecc}\]
\[\ra bddeecca\ra ddeeccaa\ra deeccaab\ra eeccaabb\ra eccaabbd\ra ccaabbdd\]
\[\ra caabbdde\ra aabbddec\ra {\bf abbddece}\ra bbddecea\ra bddeceaa\ra ddeceaab\]
\[ \ra deceaabb\ra eceaabbc\ra ceaabbcd\ra eaabbcdd\ra aabbcdde\ra {\bf abbcddee}\]
\[\ra bbcddeea\ra bcddeeaa\ra cddeeaab\ra ddeeaabb\ra deeaabbc\ra eeaabbcc\]
\[\ra eaabbccd\ra aabbccdd\]
In the above example, the words in boldface represent when we have to start rebuilding the partial target word when it appears as the prefix to the entire word, but only a one step detour is needed to accomplish this.
The above example is entirely canonical and the process works for all initial words and all alphabet sizes.

Consider the case when $k=2n-2$.  Vertex words will have length $k-1=2n-3$, and we will have two cases:  When vertex words are nearly onto, they will contain one missing letter, one singleton and the remaining letters in pairs, and thus $i(v) = o(v) = 1$. 
When vertex words are onto, they will contain exactly 3 singletons, and the remaining letters in pairs.  Such words have $i(v) = o(v) = 3$.  To exhibit weak connectivity, we adjust the  target vertex to $a_1a_2a_2\ldots a_{n-2}a_{n-2}a_{n-1}a_{n-1}$.  As $k$ decreases, the number of singletons in almost onto vertices increases, affording greater flexibility in the same algorithm.  In the most extreme case, we have  $k=n+1$, and we let the target word be $a_1a_2\ldots a_{n-2}a_{n-1}a_{n-1}$.  To prove weak connectivity, we could introduce successive letters in the target word one at a time, introducing double occurrences as a means of introducing subsequent letters.  For example with $n=5$ and $k=6$, we illustrate a path from $cebad$ to $abcdd$ as follows (with boldface entries indicating the addition of another letter in the target word):
\[cebad\ra ebadc\ra badce\ra adceb\ra dceba\ra {\bf cebab}\ra ebabd\ra babdc\ra \]
\[abdce\ra bdcea\ra dceab\ra {\bf ceabc}\ra eabcd\ra abcdd.\]
Variations of the same algorithm may be checked to establish weak connectivity for all vertex word lengths in the allowable range, and thus
%
we conclude that a U-Cycle of $k$-letter 1-2 augmented words on an $n$-letter alphabet exists for all $n$ and $k$ with $k+1 \leq n \leq 2k-1$.
\end{proof}

We now consider the general case of augmented onto words.

\begin{theorem}
A U-Cycle of $k$-letter \textit{a-b augmented} words onto an $n$-letter alphabet exists for all $n, k$ such that $an+1 \leq k \leq bn-1$.
\end{theorem}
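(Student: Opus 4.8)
The plan is to mimic the proof of Theorem 21 almost verbatim, replacing the pair $(1,2)$ by the pair $(a,b)$ and tracking the resulting counts. First I would dispose of the necessity of the range $an+1\le k\le bn-1$: since every letter must appear at least $a$ times, onto-type counting (as in \cite{ab}) forces $k\ge an+1$ for a U-Cycle to exist, while $k=bn$ corresponds to each letter appearing exactly $b$ times, which is a "uniformly $b$-fold onto" word and, exactly as with doubly onto words, admits no U-Cycle; hence $k\le bn-1$.

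Next I would set up the digraph $D$ whose vertices are the $(k-1)$-letter words that extend to a legal $a$-$b$ augmented onto word, with the usual edge convention. For the balancedness argument the key observation is a conservation law: if a vertex word $v$ of length $k-1$ has $m_t$ letters occurring exactly $t$ times for $t=0,1,\ldots,b$, then $\sum_t t\,m_t = k-1$ and $\sum_t m_t = n$. An outgoing edge appends a letter whose current multiplicity is in the range $\{a-1,\ldots,b-1\}$ — appending a letter already at multiplicity $b$ is illegal, and appending one at multiplicity $<a-1$ would make it impossible to reach $a$ occurrences in a $k$-letter word only when the remaining budget is too small; one checks that in the allowable length range the binding constraint is exactly "multiplicity $< b$." So the out-degree of $v$ is $n - m_b$, the number of letters not yet saturated. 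A symmetric count with prefixes gives the same in-degree, so $D$ is balanced. (I would also note the arithmetic identity that pins down the multiplicity profile of the edge words, generalizing the "$r=2n-k+1$" remark.)

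For connectedness I would again exhibit, from an arbitrary vertex, a walk to a fixed canonical target word of length $k-1$ of the form consisting of a block of letters at multiplicity $b$, then possibly one letter at an intermediate multiplicity, then letters at whatever small multiplicity is forced by $k-1 \bmod$ the arithmetic — i.e. the direct analogue of $a_1a_1\cdots a_{n-1}a_{n-1}$ and its cousins in the three cases of Theorem 21. The mechanism is the same: to place the next target letter $a_j$ in its correct slot, append "filler" letters (always available, since there is always some unsaturated letter other than the ones we are trying to protect, precisely because $k\le bn-1$ leaves slack) until the already-built prefix of the target reappears at the end of the word, then append $a_j$; drop the spurious extra copies of $a_j$ as soon as they rotate to the front; repeat. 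The slack guaranteeing a usable filler letter at every step is the crucial quantitative point, and it is exactly here that the hypothesis $an+1\le k\le bn-1$ is used. Once $D$ is balanced and weakly connected it is Eulerian, and concatenating edge labels along an Eulerian circuit yields the desired U-Cycle.

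The main obstacle I expect is the connectedness step, specifically verifying that a filler letter is always available no matter how close the current word is to saturation: one must confirm that among the $n$ letters at most $n-2$ can be simultaneously "protected" (the two or three singletons of the $1$-$2$ case become a small constant number here, driven by $k-1$ versus $bn$), so at least one other letter still has room to absorb a filler step; the bookkeeping is routine but must be done carefully in the extremal cases $k=an+1$ and $k=bn-1$. The balancedness step is by contrast purely mechanical. I would therefore present the proof as: (1) necessity of the range; (2) the multiplicity profile and the resulting equal in/out-degree; (3) the canonical-target connectedness algorithm with the slack estimate; (4) invoke the Euler criterion.
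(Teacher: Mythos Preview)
Your overall plan---necessity, digraph, balancedness via multiset symmetry, then a canonical-target connectedness algorithm---is exactly the paper's. Two points need correction.

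First, your out-degree formula $n-m_b$ is not right in general. A vertex word of length $k-1$ can have one letter at multiplicity exactly $a-1$ (this occurs whenever, in a legal edge word, some letter sits at multiplicity $a$ and occupies the last position). In that situation the only legal suffix is the deficient letter, so the out-degree is $1$, not $n-m_b$. The paper makes precisely this dichotomy: either one letter is at $a-1$ (degree $1$) or all letters lie in $[a,b]$ (degree equal to the number of non-maximal letters). Your balancedness \emph{conclusion} survives, since in- and out-degree depend only on the multiset, but the sentence ``one checks that in the allowable length range the binding constraint is exactly `multiplicity $<b$'\,'' is false: the lower bound also binds.

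Second, and more substantively, your connectedness sketch assumes a filler letter is always available, and you defer the extremal check to ``routine bookkeeping.'' At $k=bn-1$ this is exactly where the argument has content. A vertex word of length $bn-2$ can have every letter saturated at $b$ except a single letter at multiplicity $b-2$; then there is \emph{one} non-maximal letter, no separate filler exists, and the move is forced. The paper handles this with a short case analysis: either one letter has order $b-2$ (forced move; iterating eventually raises it to $b-1$ and pushes you into the next case), or two letters have order $b-1$ (choice available, and the Theorem~21 algorithm proceeds). Without this observation your ``at most $n-2$ letters can be simultaneously protected'' claim is simply false in the extremal regime, and the connectedness argument has a gap. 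Supplying this two-case step is what completes the proof.
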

\begin{proof} The restrictions on the word length are clearly necessary as before.
As in Theorem 20, the vertices of the digraph will be labeled with $k-1$ letter words.  There are two possibilities: each of these $k-1$ letter words either contain a single letter that appears $a-1$ times, or else all letters will appear between $a$ and $b$ times.  In the former case we have $i(v) = o(v) = 1$, and in the latter case we have $i(v)=o(v)=m$, where $m$ is the number of letters that appear between $a$ and $b-1$ times (we call these letters {\em non-maximal}).

Consider the case when $k=bn-b+1$.  Since vertex words are $b$ short of being entirely maximal, then we are guaranteed to have at least two non-maximal letters whenever all letters are represented, and so we will be able to apply the algorithm from Theorem 21 to get to a fixed target vertex such as $a_1a_1\ldots a_1\ldots a_{n-1}\ldots a_{n-1}$ consisting of $b$ repeats of each letter.  So, the graph is connected when $k=bn-b+1$.

Consider the case when $k=bn-1$.  Then edge words will be of length $bn-2$ - that is, their length will be two letters short of every letter being maximal.  However, these two letters are \textit{not necessarily distinct,} meaning that we are not guaranteed to always have an additional letter as we were in the algorithm for Theorem 21.  In other words, while it is easy to say how many letters we \textit{cannot} add (it is $m$, the number of maximal letters), it is hard to say how many distinct letters are {not maximal}.  There are two possibilities.  Either all letters but one will be maximal, and one letter have order $b-2$ in our word, or all but two letters will be maximal, and those two letters will have order $b-1$.  

\textit{Case 1: One letter of order $b-2$.}  Since there is only one non-maximal letter, we must add it.  This will ``cycle out" the first letter in our word.  If the first letter in our word is the same as our non-maximal letter, then it is still the only non-maximal letter (since when we added it once, and cycled it out once, the order did not change), and we must add it again.    Since the only time we are forced to add a letter is if there is one letter of order $b-2$, we will never be forced to make a letter maximal.  Eventually, we will raise that letter's order to $b-1$, and thus force Case 2.

\textit{Case 2: Two letters of order b-1.} In this case, we may add either letter we wish.
Since we are never forced to make a letter maximal, this shows that the algorithm for Theorem 21  holds when $k=bn-1$, and so the graph is connected.  Other values of $k$ lead to easier analyses as with Theorem 21.
\end{proof}

\section{U-Cycles on Lattice Paths}
Lattice paths are a well-studied set of combinatorial objects; see, e.g., \cite{Mo}.  In two dimensions, a lattice path of length $n$ is a sequence of points $P_1, ..., P_n$ of $\mathbb{Z}\times \mathbb{Z}$
such that the $\ell_1$ distance between $P_i$ and $P_{i+1}$ is 1 for $i = 1, ..., n-1$. It is common to use a string over the alphabet ${N, S, E, W}$ to describe these paths, where $N,S$ correspond to positive and negative movement in the $y$ direction, respectively, and $E,W$ correspond to positive and negative movement in the $x$ direction, respectively.

Suppose that we are given two positive integers $n$ and $k$, and let $P_{n,k}$ be the set of the words over our alphabet corresponding to all lattice paths of length $n$ that begin at the origin and end up at a distance of at most $k$ from it.  We will refer to elements of $P_{n,k}$ both as paths and as words.  For instance, in the case of $n = k = 3$, $P_{3,3}$ would be the set of all paths ending at any destination $(x, y)$ with the restriction that $|x| + |y| \leq 3$. Of course, there are several ways of constructing any a path to an endpoint. For example we can get to (2, 1) via the paths $EEN, ENE, NEE$, which all have length 3.

It has been shown in \cite{blanca} that a U-Cycle of lattice paths in $P_{n,k}$ exists in the two-dimensional cartesian plane for all $n,k$.  We will extend this result in this section to three dimensions.

\subsection{Three Dimensional Lattices}

Let $P_{n,k}$ be the set of the words on the alphabet $\Sigma=\{N, S, E, W, U, D\}$ corresponding to three-dimensional lattice paths of length $n$ that start at the origin and end up at a distance of at most $k \geq 1$ from it.  Let $\Pi_{k}$ be the set of all points in the three-dimensional lattice with $|x|+|y|+|z|\le k$.

We first state and prove the following two key auxiliary results:

\begin{lemma}
In the digraph defined below, every vertex has degree 6 or it is connected to some vertex of degree 6.
\end{lemma}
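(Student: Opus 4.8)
The plan is to split the lemma into two steps: pin down exactly which vertices have degree $6$, and then show every other vertex reaches one of those by a directed path. First I would note that a vertex of the digraph is a word of length $n-1$ over $\Sigma$, which I read as a lattice walk starting at the origin and ending at a point $p$; the word is a genuine vertex exactly when $p\in\Pi_{k+1}$, since that is precisely when $p$ has a lattice neighbour in $\Pi_k$. For such a vertex, appending a direction $s$ is legal iff $p+e_s\in\Pi_k$, where $e_s$ denotes the unit step in direction $s$, and prepending a direction $y$ is legal iff $e_y+p\in\Pi_k$ --- the same condition. Hence in-degree equals out-degree at every vertex (so the whole digraph is automatically balanced, which is a separate ingredient needed later), and this common degree is $6$ exactly when all six neighbours of $p$ lie in $\Pi_k$, i.e.\ exactly when $\|p\|_1\le k-1$. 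Call these vertices \emph{central}; every non-central vertex has $\|p\|_1\in\{k,k+1\}$, and the lemma is the claim that from each non-central vertex there is a directed path to a central one.

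Next I would describe the steering walk. Traversing an out-edge from $w=w_1\cdots w_{n-1}$ deletes the forced leading letter $w_1$ and appends a letter $s$ of our choosing, subject only to $p+e_s\in\Pi_k$, producing a vertex whose walk ends at $p+e_s-e_{w_1}$. The idea is to always append a step \emph{toward the origin}, i.e.\ one decreasing a nonzero coordinate of the current endpoint (if the endpoint is already the origin we are central and stop). Such a step always exists when the endpoint is nonzero, and it is always legal, since it moves the endpoint to $p+e_s$ with $\|p+e_s\|_1=\|p\|_1-1\le k$; moreover the successor is again a legitimate vertex, its endpoint having norm at most $k+1$. Thus every edge we use is valid and the $\ell_1$-norm of the endpoint never increases along the walk.

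The step I expect to be the real obstacle is showing that this walk actually enters the central set rather than staying pinned at norm $k$ or $k+1$ forever. Here I would use the fact that after at most $n-1$ moves every original letter has been cycled out, so the current word consists entirely of the toward-the-origin steps we appended, and its endpoint is the sum of those $n-1$ chosen unit vectors. With the steps chosen by a fixed rule --- always reduce the first nonzero coordinate, and once the running endpoint hits the origin pad with a canceling pair such as $N,S$ --- I expect to be able to show this sum has $\ell_1$-norm at most $1$, which is the smallest value compatible with the parity constraint $\|p\|_1\equiv n-1\pmod 2$, while keeping every intermediate edge legal. The mechanism is that over each block of $n-1$ consecutive moves the total displacement contributed by the deletions is exactly the displacement of the block of letters removed --- a fixed vector --- so the deletions cannot indefinitely cancel the inward progress of the appended steps; a short potential argument along these lines should close it.

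Finally I would deal with the boundary case. An endpoint of norm at most $1$ is central as soon as $k\ge2$; when $k=1$ a central vertex needs the endpoint to be the origin, which a length-$(n-1)$ walk attains precisely when $n-1$ is even, so the only case not covered by the argument above is $k=1$ with $n$ even, which is small and structured enough to dispatch by direct inspection. Combining these pieces proves that every vertex either has degree $6$ or is joined by a directed path to one that does.
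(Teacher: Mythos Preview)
Your setup is clean and your identification of the central vertices (those with endpoint in $\Pi_{k-1}$) is correct, as is the observation that a ``step toward the origin'' is always a legal out-edge and keeps the $\ell_1$-norm of the endpoint from increasing. The gap is exactly where you flag it: you do not actually prove termination, and the specific claim you make is false. You assert that after $n-1$ moves under the rule ``reduce the first nonzero coordinate, pad with $N,S$ at the origin'' the new word has endpoint of $\ell_1$-norm at most $1$. Take $k=3$, $n-1=10$, and start from $NSNSNSUUUU$ (endpoint $(0,0,4)$). Your rule produces the sequence of appended letters $D,N,S,N,S,N,S,D,N,S$, whose sum is $(0,0,-2)$, of norm $2$, not $\le 1$. (The walk does happen to hit a central vertex earlier, at step $7$, but your argument was supposed to go through the norm-$\le 1$ bound, which fails.) More generally the ``first nonzero coordinate'' rule can oscillate for arbitrarily many steps before the norm drops, so the potential argument you gesture at --- ``deletions cannot indefinitely cancel the inward progress'' --- needs real work and is not the one-line closure you suggest. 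As written the proof is incomplete.

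The paper sidesteps this difficulty entirely by a short case analysis rather than a steering walk. A non-central vertex has endpoint on the boundary of $\Pi_k$ or $\Pi_{k+1}$, hence degree $1$, $2$, or $3$ according as the endpoint sits at a corner, edge, or face of the octahedron. In each case, under the standing hypothesis $k\ge 3$ (which you should be using --- Theorem~25 assumes $k+1\ge 4$), at least one coordinate of the endpoint has absolute value $\ge 2$; equivalently some opposing pair of letters, say $U$ and $D$, satisfies $\#U-\#D\ge 2$. One then cycles the word until a $U$ is at the front and replaces it by a $D$: this single swap drops the endpoint norm by exactly $2$, landing in $\Pi_{k-1}$ and hence at a degree-$6$ vertex. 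No asymptotic or potential argument is needed; the point is that $k\ge 3$ guarantees the ``big'' coordinate whose letter you can trade away. Your approach tries to cover $k\ge 2$ (and $k=1$ by inspection), which is more than the paper claims and is part of why your argument stalls.
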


\begin{proof}
 The digraph $D$ in question consists of all strings over $\Sigma$ representing three dimensional lattice paths from the origin, of length $n-1$, and ending up at a distance $\le k+1$ from the origin.  There is an edge between $v=a_1a_2\ldots a_{n-1}$ and $w=b_1b_2\ldots b_{n-1}$ if $a_j=b_{j-1}$ for $2\le j\le n-1$ and the concatenation of $v$ and $w$ represents a lattice path in $P_{n,k}$.  Consider a vertex word in $D$.  If our vertex word's endpoint $v$ lies within the interior of $\Pi_k$, i.e., in $\Pi_{k-1}$, it must have $i(v) = o(v) = 6$.  Other vertices have the same indegree and outdegree which might equal 1, 2, or 3.  All such vertices lie on the boundary of $\Pi_k$ or $\Pi_{k+1}$ and the differences in degrees depend on whether they lie on the corner, edge, or face of the polyhedron.

We first show that every vertex of degree 1 is connected to a vertex of degree 6.
Every vertex of degree 1 corresponds to a path whose endpoint is in a corner of $\Pi_k$ or $\Pi_{k+1}$.

Consider a degree 1 vertex $v_1$ associated with a word whose endpoint lies one move outside of $\Pi_k$.  Such a vertex word must lead to an edge-word whose endpoint lies in the corner of $\Pi_k$.  However, the associated vertex it connects to might still be in $\Pi_{k+1}$ since it might contain the same numbers of the sybols in $\Sigma$.  For example, with $n=16$ and $k=4$, the vertex $$SSSSUDEWNNNNNNNNN$$ points only towards $$SSSUDEWNNNNNNNNNS$$ which also has degree $1$.  Assume without loss of generality that the vertex $v_1$ has $k+1$ more $U$ steps than $D$ steps; an equal number of $E$ and $W$ steps; and an equal number of $N$ and $S$ steps.  In order to connect it, in multiple steps, to a vertex of degree six, we cycle it until the letter $U$ appears at the front of the associated word, drop it, and replace it with a $D$.  This makes $\#(U)-\#(D)=k-1$, while not changing the numbers of other letters, so that the endpoint of the vertex is in $\Pi_{k-1}$ and has degree six.  The same strategy works for vertices on the corner of $\Pi_k$.

\begin{figure}[h!]
\centering
\includegraphics[height=2.5in]{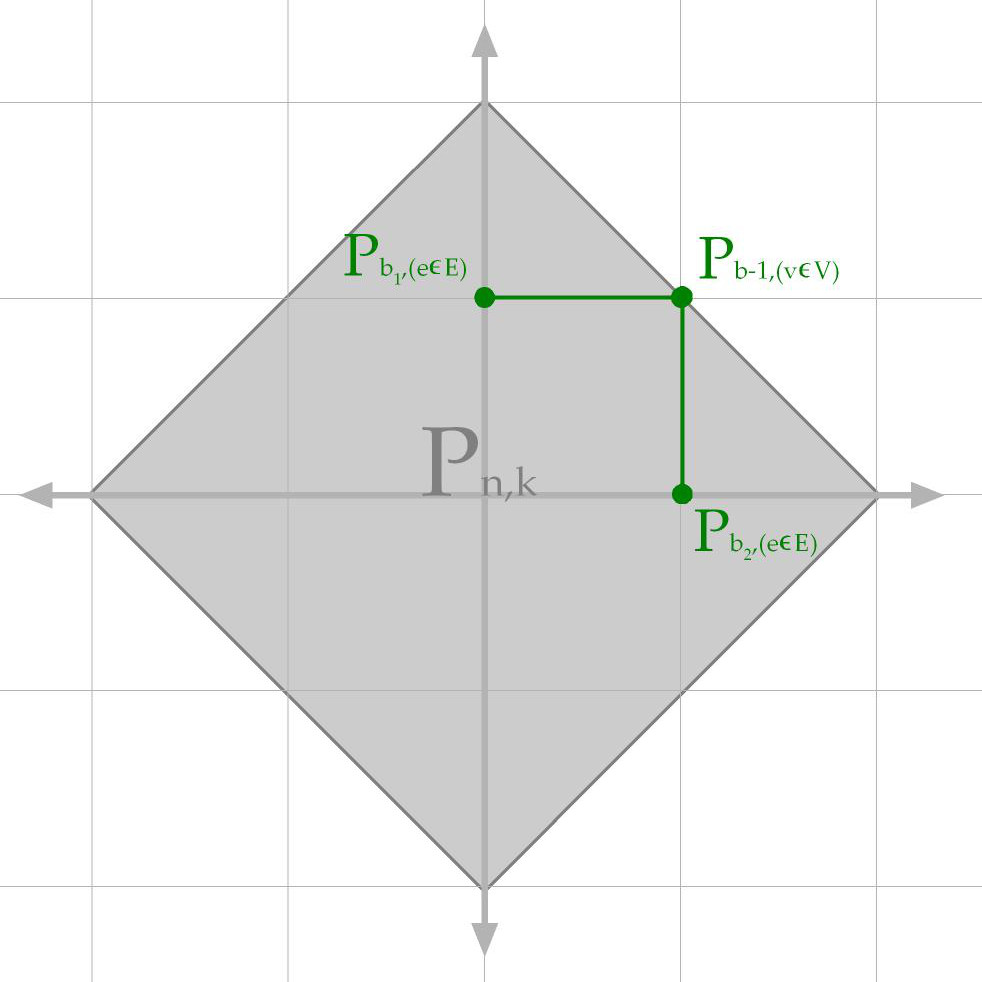}
\caption{Implications of degree 2.  Cross-sectional view of $P_{n,k}$.}
\label{fig:fig1}
\end{figure}

Let $v_2$ be a vertex of degree 2 on an edge of $\Pi_{k+1}$.  Such a vertex might point at two others both of which are also on the boundary of $\Pi_{k+1}$ as seen by the example where $$SSUUDDNNNNEEEEEWW$$ transitions to \[SUUDDNNNNEEEEEWWS\] or \[SUUDDNNNNEEEEEWWW.\]  The general strategy for this case is the following:  Assume without loss of generality, that such a vertex has an equal number of $U$s and $D$s, and that we have $\#(N)-\#(S)\ge2$ or $\#(E)-\#(W)\ge 2$.  (If both of these differences are 1, then we must have $k=1$ which is not allowable.)  Assuming that $\#(N)-\#(S)\ge2$, we cycle the letters of the vertex until an $N$ appears at the front of the associated word, drop it, and replace it with an $S$.  This makes $\#(N)-\#(S)$ two smaller than before, while not changing the numbers of other letters, so that the vertex is in $\Pi_{k-1}$ and has degree six.  The same strategy works for vertices on an edge of $\Pi_k$.

\begin{figure}[h!]
\centering
\includegraphics[height=2.5in]{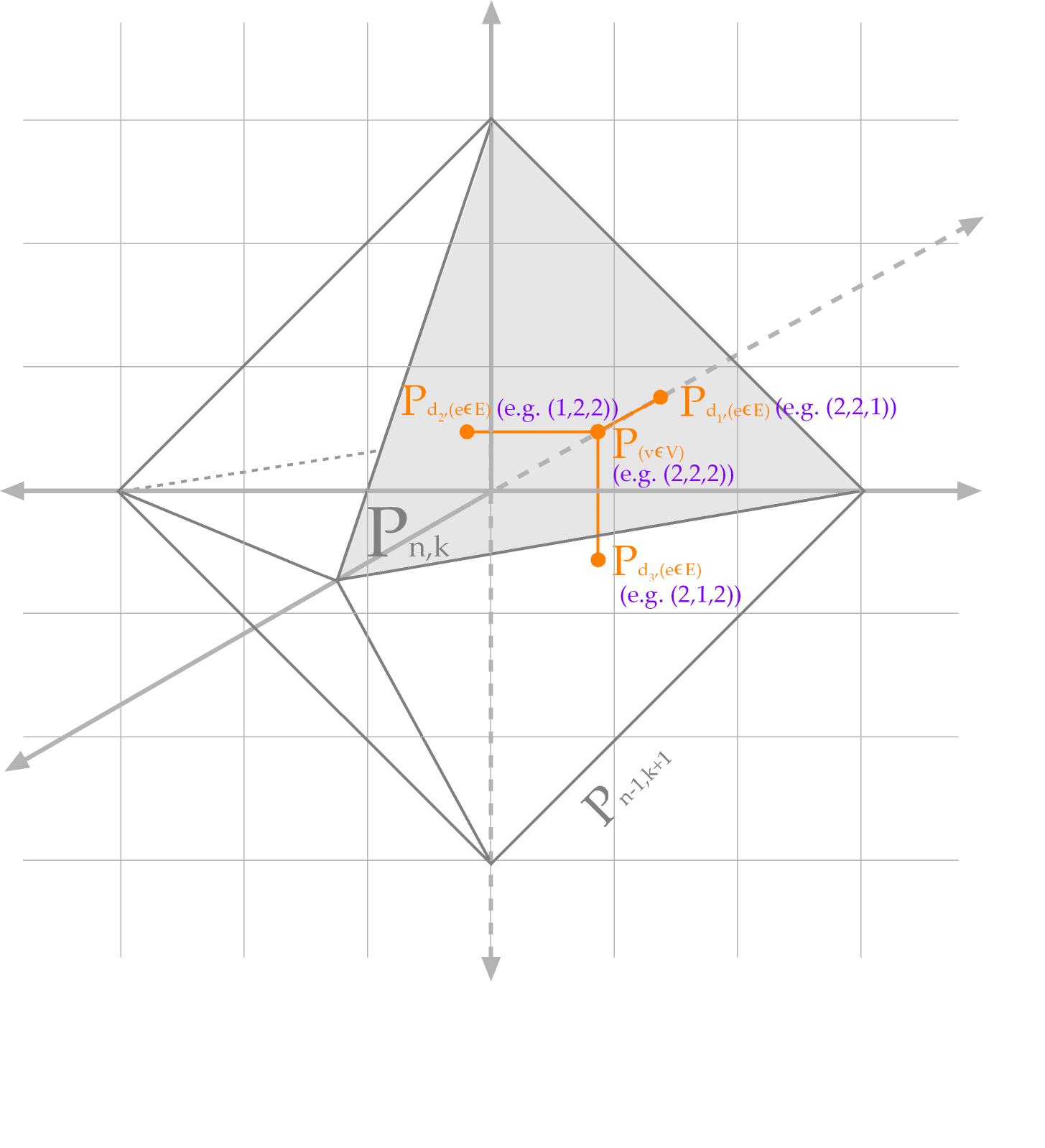}
\caption{Implications of degree 3.}
\label{fig:fig2}
\end{figure}

Finally let $v_3$ be a vertex of degree 3 on a face of $\Pi_{k+1}$.  The strategy for this case is the following:  Assume without loss of generality, that such a vertex has $\#(U)-\#(D)\ge2$ or $\#(N)-\#(S)\ge2$ or $\#(E)-\#(W)\ge 2$.  (If each of these differences are 1, then we must have $k=2$ which is not allowable.)  Assuming that $\#(N)-\#(S)\ge2$, we cycle the letters of the vertex until an $N$ appears at the front of the associated word, drop it, and replace it with an $S$.  This makes $\#(N)-\#(S)$ two smaller than before, while not changing the numbers of other letters, so that the vertex is in $\Pi_{k-1}$ and has degree six.  The same strategy works for vertices on a face of $\Pi_k$.

This proves the result.
\end{proof}

\begin{lemma}
If $n$ is odd, every vertex in $V$ is connected to a vertex whose corresponding path ends at the origin, and if $n$ is even, every vertex in $V$ is connected to a vertex corresponding to a path ending at distance 1 from the origin.
\end{lemma}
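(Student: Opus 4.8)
The plan is to show that every vertex (a word of length $n-1$ representing a lattice path from the origin) can be connected, within the digraph $D$, to a ``canonical'' endpoint: the origin when $n$ is odd, and one of the six neighbors of the origin when $n$ is even. The parity constraint is the crux: a lattice path of length $n-1$ has $\#(U)+\#(D)+\#(N)+\#(S)+\#(E)+\#(W)=n-1$, and its endpoint's $\ell_1$-distance from the origin has the same parity as $n-1$, because each step changes that distance by $\pm1$. So when $n$ is odd, $n-1$ is even and the endpoint can be at the origin; when $n$ is even, the endpoint's distance is odd, so the best we can hope for is distance $1$. This is exactly why the two cases of the statement are stated the way they are, and it tells us what target to aim for.

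First I would set up the target vertex words explicitly. For $n$ odd, I would aim for the word consisting of $\tfrac{n-1}{2}$ copies of $N$ followed by $\tfrac{n-1}{2}$ copies of $S$ (or any arrangement with $\#(N)=\#(S)$ and all other letters absent), whose endpoint is the origin. For $n$ even, I would aim for the word with $\lceil (n-1)/2\rceil$ copies of $N$ and $\lfloor(n-1)/2\rfloor$ copies of $S$, whose endpoint is at distance $1$. The construction then proceeds the way the earlier proofs in Section 2 and Lemma 24 do: we ``append a letter at the end, drop a letter from the front'' repeatedly, which corresponds to traversing edges of $D$, and we must check that every intermediate word stays legal, i.e., never strays to distance $>k+1$ (so it remains a vertex of $D$) and each appended step keeps the concatenation inside $P_{n,k}$. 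Since by Lemma 24 we may first move to a vertex of degree $6$ (endpoint in $\Pi_{k-1}$, with slack in every coordinate), we start the construction from such a vertex, giving us room to maneuver.

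From a degree-$6$ vertex I would drive the endpoint toward the origin coordinate by coordinate: cycle the word until the letter that currently moves us \emph{away} from the origin in, say, the $z$-coordinate appears at the front, drop it, and append the opposite letter at the end — this reduces $|\#(U)-\#(D)|$ by $2$ while keeping the path legal (the endpoint moves strictly closer to the origin, hence stays in $\Pi_{k-1}\subset\Pi_k$). Iterating kills the $z$-discrepancy, then the $y$- and $x$-discrepancies, and finally we are at a word with $\#(U)=\#(D)$, $\#(E)=\#(W)$, $\#(N)=\#(S)$ (respectively off by one in a single coordinate in the even case), whose endpoint is the origin (resp.\ a neighbor). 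A last clean-up phase rearranges this word into the chosen canonical target by the same append/drop moves, all of which stay at distance $\le1$ from the origin and hence are legal since $k\ge1$. In the even case the argument is identical except the final discrepancy vector is $(\pm1,0,0)$ (or a permutation), achieved by stopping one step early in one coordinate.

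The main obstacle I expect is bookkeeping the legality of intermediate words during the ``cycling'' moves: when we rotate the word to bring a particular letter to the front, the prefixes traversed correspond to shifted paths, and one must confirm each such shifted path also lies in $P_{n,k}$ — but this follows because cyclic rotation of a path that starts and returns near the origin keeps all partial sums bounded, and the degree-$6$ starting point gives a full unit of slack in every direction. The even case needs one extra remark, namely that distance exactly $1$ is attainable and distance $0$ is not, which is precisely the parity observation above; no separate combinatorial work is needed. With both lemmas in hand, the Eulerian/U-cycle conclusion for three-dimensional lattice paths then follows in the next result exactly as in the two-dimensional case of Blanca, Theorem 12.
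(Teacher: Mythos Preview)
Your strategy matches the paper's: pass to a degree-$6$ vertex via the previous lemma (you cite it as Lemma~24, but it is the lemma \emph{before} the one being proved), then cycle-and-replace to shrink coordinate discrepancies. There is, however, a genuine gap in the endgame. Your basic move ``drop a $U$, append a $D$'' changes $\#(U)-\#(D)$ by $2$, so it preserves the parity of that \emph{individual} discrepancy; the three parities of $\#(U)-\#(D)$, $\#(N)-\#(S)$, $\#(E)-\#(W)$ are not determined by $n$ --- only their sum modulo $2$ is. Hence after your coordinate-by-coordinate reduction you may be left with two unit discrepancies when $n$ is odd, or three when $n$ is even, rather than the $(0,0,0)$ or single-unit vectors you claim. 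For a concrete instance take $n=7$, $k=5$, and the degree-$6$ vertex $UUUNEW$: the discrepancy vector is $(3,1,0)$, your procedure brings it to $(1,1,0)$, and you are stuck at distance $2$, not at the origin. Your clean-up phase, which explicitly presupposes distance $\le 1$, cannot absorb this.

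The paper closes exactly this gap with one additional move: a \emph{cross-category} replacement, cycling a surplus letter from one unbalanced pair to the front and appending the deficit letter from a \emph{different} unbalanced pair. This kills two unit discrepancies at once (distance $2\to 0$, or $3\to 1$), and is legal because the vertex is still at distance at most $k-1$ and hence of degree $6$. With that single extra step inserted, your argument is complete and essentially identical to the paper's; the paper also first sorts the word into monochromatic blocks via adjacent swaps, but that reordering is cosmetic and your more direct route works just as well.
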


\begin{proof}
For a vertex to have a path that ends in the origin, it must have equal numbers of each of the letters in a component direction, e.g., an equal number of $N$s and $S$s.  This is only possible if the vertex length, $n-1$ is even.  If, however, $n$ is even, then a vertex can only correspond to a path which is ``one move away" from ending at the origin, or in other words, paths ending in $(0,0,1)$ or $(0,1,0)$, or $(1,0,0)$.

Since every vertex in $V$ has degree 6 or it is connected to some vertex of degree 6, we need only consider beginning vertices with degree 6.  Pick one, and call this vertex $v_i$.

Let $n_i, s_i, e_i, w_i, u_i$, and $d_i$ be the number of $N, S, E, W, U,$ and $D$ moves in the path of length $n-1$ associated with $v_i$.  This vertex will be associated with a ``word" containing $n-1$ letters in the amounts specified.  By definition, $v_i$ corresponds to a path that ends at a distance at most $k+1$ from the origin.  Such a path must also end within one of 8 simplex-shaped regions surrounding the origin.  Without loss of generality, let us assume that all three are positive, so we will be appending $W, S$, and $D$ moves to our word to connect it to a path ending at the target vertex  near or at the origin.  

Since $v_i$ has degree six, any swap of adjacent elements is allowable, by rotating until the two letters $ab$ in question are at the beginning of the word and then replacing the $a$ by a $b$ and then the $b$ by an $a$.  Using swaps of adjacent elements, we re-order the path associated with $v_i$ such that all the $U$ moves are at the end, and preceded in turn by all the $D$, $N$, $S$, $E$ and $W$ moves.  Since the path associated with $v_i$ ends at a point where $x,y,z$ are positive, it lies somewhere to the North, East, and Up from the origin.  Exploiting the degree six ``trump card", we rotate appropriately and replace as many $U$s by $D$s as needed to bring the frequency of these letters to within one, and then do the same with the other two pairs of letters.  Now if $n$ is odd, we see from parity considerations that either each of these differences is zero, or exactly two of these differences are one (and the last zero).  In the first case, we are at a vertex whose path begins at the origin and we are done.  In the second case, we replace the letter with the larger frequecy in one category by a letter with the smaller frequency in the second category to arrive at a vertex whose path begins at the origin.  If $n$ is even we see that the number of unit differences is either one or three.  We are done in the former case since we are at a vertex whose path begins at one of the three points $(1,0,0,), (0,1,0),$ or $(0,0,1)$.  In the latter case we reach one of these three vertices via a single swap of elements.  This completes the proof.
\end{proof}

\begin{theorem}
For $n\ge k+1\ge 4$, there exists a universal cycle for all paths in $P_{n,k}$ in three dimensions. (If $n\le k$, all lattice paths are valid and the result also holds true by deBruijn's Theorem.)
\end{theorem}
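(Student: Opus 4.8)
The plan is to exhibit $P_{n,k}$ as the edge set of an Eulerian digraph. Assume $n\ge k+1\ge 4$; when $n\le k$ every length-$n$ path stays within $\Pi_k$, so deBruijn's theorem applies. Take $D$ to be exactly the digraph of Lemma 22: its vertices are the length-$(n-1)$ words over $\Sigma$ whose path starts at the origin and ends within $\Pi_{k+1}$, and there is an edge $v\ra w$ precisely when $w$ is obtained from $v$ by deleting its first letter and appending a new one so that the resulting length-$n$ word lies in $P_{n,k}$. A U-Cycle of $P_{n,k}$ is then read off the appended letters along an Eulerian circuit of $D$, so it suffices to prove $D$ is balanced and weakly connected.

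Balancedness is essentially the content of the proof of Lemma 22: if the path of a vertex $v$ ends at a point $p$ with $|p|_1\le k+1$, then both the out-edges and the in-edges of $v$ are indexed by the moves $\ell\in\Sigma$ for which adjoining $\ell$ (at the end, respectively at the front) keeps the resulting length-$n$ path inside $\Pi_k$ --- and in either case this says exactly that $p$ moved one unit step in direction $\ell$ still lies in $\Pi_k$. Hence $i(v)=o(v)$ for every $v$, the common value being $6$ when $p\in\Pi_{k-1}$ and $3$, $2$, or $1$ on a face, edge, or corner of $\Pi_k$ or $\Pi_{k+1}$; this is where the hypothesis $k\ge 3$ enters, exactly as in Lemma 22 (the degree-$2$ and degree-$3$ analyses there fail for $k=1,2$).

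For weak connectedness, Lemma 22 lets us restrict attention to vertices of degree $6$, and Lemma 23 sends each such vertex, along a path in $D$, to a ``near-origin'' vertex: one whose path ends at the origin if $n$ is odd, and at one of the three points at distance $1$ from the origin if $n$ is even. It then remains to connect all near-origin vertices to a single fixed target, and here I would argue as follows. A near-origin vertex is itself a degree-$6$ vertex, so --- as with the ``adjacent-swap trump card'' of Lemma 23 --- one may rotate its word and transpose adjacent letters freely; moreover one may convert an adjacent back-and-forth pair of steps such as $W,E$ into another such as $S,N$ by rotating that pair to the front and appending, in turn, the two letters $S$ and $N$. Using these moves I would first convert every $\{W,E\}$ pair and then every $\{U,D\}$ pair of the near-origin word into $\{N,S\}$ pairs, reducing to a word on $\{N,S\}$ only, and then sort that word into the canonical target $t^\ast=S^{m}N^{m}$ ($n$ odd, $m=(n-1)/2$) or $t^\ast=S^{m}N^{m+1}$ ($n$ even, $m=(n-2)/2$). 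In each conversion one appends first the move that points back toward the origin; this keeps every intermediate vertex within $\Pi_3$ and every intermediate edge-path within $\Pi_2$, comfortably inside $\Pi_k$ since $k\ge 3$, so the whole detour is legal in $D$. Every vertex being thus joined to $t^\ast$, $D$ is weakly connected; being also balanced and having no isolated vertices, it is Eulerian, and the proof is complete.

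The step I expect to be the real obstacle is this last one. Lemmas 22 and 23 only deliver an arbitrary vertex to \emph{some} near-origin vertex, and distinct near-origin vertices carry distinct letter multisets, so one genuinely needs the pair-conversion-and-sort argument to fuse them into one component --- together with the routine but somewhat fiddly verification that, by always playing the origin-ward move first, none of the rotations, transpositions, conversions, or sorting steps ever pushes a vertex outside $\Pi_{k+1}$ or an edge-path outside $\Pi_k$. Everything else is bookkeeping layered on top of the two lemmas.
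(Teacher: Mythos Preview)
Your proof is correct and follows the same overall strategy as the paper: the same digraph $D$, the same balancedness argument (both the paper and you observe that in- and out-degree at $v$ depend only on the endpoint $p$ of the associated path, via the set $\{\ell\in\Sigma:\;p+\ell\in\Pi_k\}$), and the same use of the two lemmas to reduce weak connectedness to linking the near-origin vertices among themselves.

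The only substantive difference is in that last linking step. The paper simply asserts that ``each such terminal vertex is connected to a canonical one, say one with letters appearing in consecutive blocks of the same letter,'' invoking the swap technique from the proof of the second lemma; you instead convert every $\{E,W\}$ and $\{U,D\}$ back-and-forth pair into an $\{N,S\}$ pair and then sort to the fixed target $S^mN^m$ (or $S^mN^{m+1}$). These are different choices of canonical vertex but the same mechanism. In fact your version is more explicit on precisely the point you flag as the obstacle: the paper's block-sorted form still depends on the letter multiset, so two near-origin vertices such as $N^mS^m$ and $E^mW^m$ are not \emph{obviously} joined by the paper's sentence alone, whereas your pair-conversion argument handles this directly.
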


\begin{proof}
 Let ${\mathcal V}$ be the set of all words on the alphabet ${N, S, E, W, U, D}$ corresponding to lattice paths of length $n - 1$ which start at the origin and end at a distance $d \leq k + 1$ from it.  These words are associated with vertices in our digraph.  
As before, define the digraph $D =< {\mathcal V}, {\mathcal E} >$, with the edgeset ${\mathcal E}$ being defined by letting there be an edge from $v=a_1a_2\ldots a_{n-1}$ to $w=b_1b_2\ldots b_{n-1}$ if $a_j=b_{j-1}$ for $2\le j\le n-1$ and the concatenation of $v$ and $w$ represents a lattice path in $P_{n,k}$.    
We seek to exhibit an Eulerian path in $D$, by showing that $D$ is balanced and weakly connected.

To show balancedness, we proceed as in the proof of Lemma 23.  In fact, the fact that the digraph is balanced is a consequence of the fact that it is merely the numbers of letters of each type that determine whether or not a vertex is in $P_{n,k}$, and the entire situation is illustrated in Figure 4.
%

\begin{figure}[h!]
\centering
\includegraphics[height=2.5in]{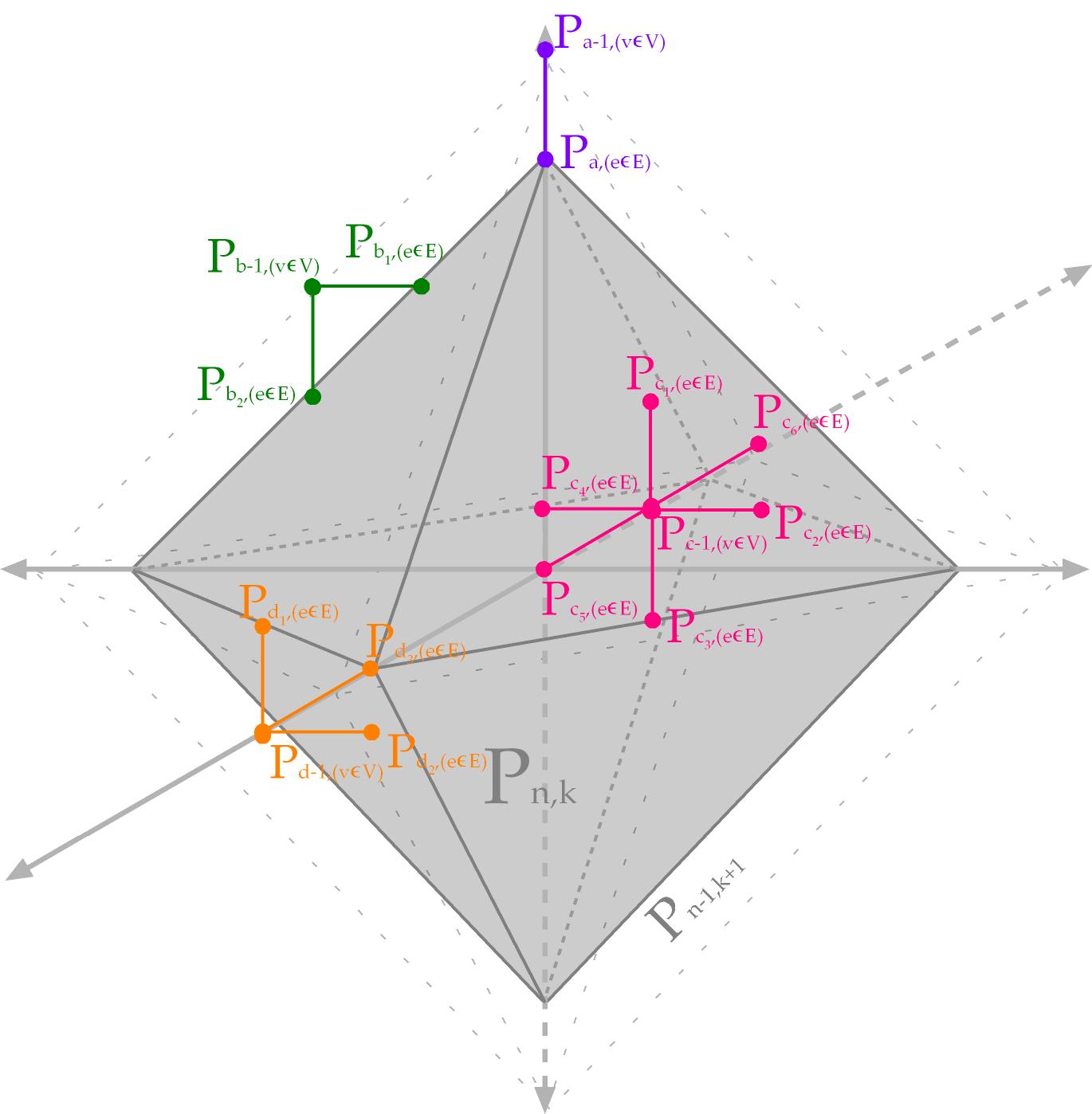}
\caption{Three-dimensional endpoints.}
\label{fig:fig4}
\end{figure}


Finally, we show weak connectedness.  By Lemma 24, if $n$ is odd, every vertex in $V$ is connected to a vertex whose corresponding path ends at $(0, 0, 0)$, and if $n$ is even, every vertex in $V$ is connected to a vertex corresponding to a path ending at (without loss of generality) $(0, 0, 1)$.   We can now show (as in the proof of Lemma 24) that each such terminal vertex is connected to a canonical one, say one with letters appearing in consecutive blocks of the same letter. Therefore, the digraph is weakly connected, and thus Eulerian, and so a U-Cycle exists.
\end{proof}
\noindent{\bf Remark.}  It is not too hard to formulate and prove a result similar to Theorem 25 in the case of $m$-dimensional lattice paths.








 


 

\end{document}